\documentclass[12pt]{article}
\usepackage{color}
\usepackage{amsmath}
\usepackage[marginal]{footmisc}
\usepackage{extarrows}
\usepackage{geometry}
\usepackage{authblk} 
\usepackage{hyperref} 
\usepackage{color}
\usepackage{ntheorem}

\usepackage[mathscr]{euscript}
\usepackage[utf8]{inputenc}
\oddsidemargin -0.04cm \evensidemargin -0.04cm \textheight 23.5cm
\textwidth 17cm \topmargin 0.45cm \headheight 0 cm \headsep 0 cm
\parskip 0.1 cm

\def\0{\emptyset}

\newcommand{\lc}{\left\lceil}
\newcommand{\rc}{\right\rceil}
\newtheorem{theorem}{Theorem}[section]

\newtheorem{lemma}[theorem]{Lemma}

\newtheorem{cor}[theorem]{Corollary}

\newtheorem{construction}[theorem]{Construction}

\newenvironment{reproof}{{\noindent\it Proof of Theorem \ref{thm: ex for k_l,t}.}}{\hfill $\square$\par}

\newenvironment{proof}{{\noindent\it Proof.}}{\hfill $\square$\par}
\newenvironment{reproof1_4}{{\noindent\it Proof of Theorem \ref{thm: ex for k_2,2 free}.}}{\hfill $\square$\par}
\newenvironment{reproof1_5}{{\noindent\it Proof of Theorem \ref{thm: ex for k_2,t free}.}}{\hfill $\square$\par}

\usepackage{graphicx}



\usepackage{enumerate}
\usepackage{enumitem}

\usepackage{
	amsmath,			
	amssymb,			
	enumerate,		    
	graphicx,			
	lastpage,			
	multicol,			
	multirow,			
	pifont,			    
}

\usepackage[numbers]{natbib}

\newcommand{\excm}{ex(n,\{K_{2,2}, M_{s+1}\})}

\newcommand{\exkm}{ex(n,\{K_{l,t}, M_{s+1}\})}



\begin{document}


\title{ Tur\'an number of complete bipartite graphs with bounded matching number}
\author{
    {\small\bf Huan Luo}\thanks{email:  h-luo20@mails.tsinghua.edu.cn}\quad
    {\small\bf Xiamiao Zhao}\thanks{email:  zxm23@mails.tsinghua.edu.cn}\quad
    {\small\bf Mei Lu}\thanks{Corresponding author : email: lumei@tsinghua.edu.cn\\Huan Luo and Xiamiao Zhao should be considered joint first author}\\
    {\small Department of Mathematical Sciences, Tsinghua University, Beijing 100084, China.}\\
    
}
\date{}

\maketitle\baselineskip 16.3pt
\begin{abstract}
Let $\mathscr{F}$ be a family of graphs. A graph $G$ is $\mathscr{F}$-free if  $G$ does not contain any $F\in \mathcal{F}$  as a subgraph. The Tur\'an number $ex(n, \mathscr{F})$ is the maximum number of edges in an $n$-vertex $\mathscr{F}$-free graph. Let $M_{s}$ be the matching
 consisting of $ s $ independent edges.
	Recently, Alon and Frank determined  the exact value of $ex(n,\{K_{m},M_{s+1}\})$.  Gerbner obtained several results about $ex(n,\{F,M_{s+1}\})$ when $F$ satisfies certain proportions. In this paper, we determine the exact value of  $ex(n,\{K_{l,t},M_{s+1}\})$ when $s, n$ are large enough for every $3\leq l\leq t$. When $n$ is large enough, we also show that $\excm=n+{s\choose 2}-\lc\frac{s}{2}\rc$ for $s\ge 12$ and $ex(n,\{K_{2,t},M_{s+1}\})=n+(t-1){s\choose 2}-\lc\frac{s}{2}\rc $ when $t\ge 3$ and $s$ is large enough.
 \end{abstract}


{\bf Keywords:}  Tur\'an number,  matching number, bipartite graph, extremal problem.
\vskip.3cm

\section{Introduction}

In this paper, we only consider finite, simple and undirected graphs. Let $G=(V,E)$ be a graph, where $V$ is the vertex set and $E$ is the edge set of $G$. We use $n$ and $e(G)$ to denote the order and size of $G$, respectively.

Let $\mathscr{F}$ be a family of graphs. A graph $G$ is $\mathscr{F}$-free if  $G$ does not contain any $F\in \mathcal{F}$  as a subgraph. The Tur\'an number, denoted by $ex(n, \mathscr{F})$, is the maximum number of edges in an $n$-vertex $\mathscr{F}$-free graph. If $\mathscr{F}=\{F\}$, we write $ex(n, F)$ instead of  $ex(n, \mathscr{F})$.

The well-known result  by Tur\'an \cite{turan1941external} asserted that $ex(n,K_{k+1})=e(T(n,k))$, where $T(n,k)$ is the balanced complete $k$-partite graph. The Tur\'an Theorem is considered as the origin of extremal graph theory.

Let $K_{l,t}$ ($l\leq t$) be a complete bipartite graph, the numbers of vertices in each part are $l$ and $t$ respectively. A matching $M_s$ is consist of $s$ independent edges. There are lots of results about $ex(n,K_{l,t})$.   $G$ is $K_{1,t}$-free if and only if the maximum degree of $G$ is less than $t$ and then $ex(n,K_{1,t})=\lfloor\frac{(t-1)n}{2}\rfloor$.
Erd\H{o}s and Gallai \cite{erdos1961minimal} showed that
   $$ ex(n,M_{s+1})=\max\left\{ns-{s+1\choose 2},{2s+1\choose 2}\right\}.$$
For $K_{2,2}=C_4$, i.e. a cycle with length $4$, the first result about $ex(n,C_4)$ was given by K\H{o}v\'ari, S\'os and Tur\'an \cite{kHovari1954problem}. That is
\begin{equation}\label{eq: ex(n,c4)<}
   ex(n,C_4)\leq \frac n4(1+\sqrt{4n-3}).
\end{equation}
Brown \cite{brown1966graphs}, and Erd\H{o}s, R\'enyi and S\'os \cite{erdos1966} independently and simultaneously proved that
$$ex(q^2+q+1,C_4)\geq\frac{1}{2}q(q+1)^2$$
for all prime powers $q$.
F\"uredi \cite{furedi1983graphs,furedi1996number} proved that for all prime powers $q\geq 14,$
$$ex(q^2+q+1,C_4)=\frac{1}{2}q(q+1)^2.$$
He, Ma and Yang \cite{he2021some} showed that when $q$ is large enough and is an even integer, all $C_4$-free graphs $G$ with $q^2+q+1$ vertices and $e(G)\geq \frac{1}{2}q(q+1)^2-0.2q+1$ are subgraphs of a unique polarity graph of order $q.$ But there is still no results about the value of $ex(n,C_4)$ for a general positive integer $n$.
A result of $ex(n,K_{2,t})$ was given by F\"uredi \cite{uredi1996new} who proved when $t\geq 2$,  $$(n,K_{2,t})=(1+o(1))\sqrt{t-1}n^{3/2}/2.$$
On general case when $2\leq l\leq t$, the celebrated K\H{o}v\'ari–S\'os–Tur\'an Theorem \cite{kHovari1954problem} stated that $$ex(n,K_{l,t})= O(n^{2-\frac{1}{t}}),$$ one of its proof was given by Alon, Krivelevich and Sudakov \cite{alon2003turan}. But the upper bound has not been proved to be tight unless $l\in \{2,3\}$ or $t$ is sufficiently large with respect to $l$ \cite{alon1999norm,bukh2021extremal}.

Recently, Alon and Frankl \cite{alon2024turan} proposed to consider the maximum number of edges of a $F$-free graph on $n$ vertices with matching number at most $s$, that is, $ex({n,\{F,M_{s+1}\}})$. In the same paper, they proved that $ex(n,\{K_{k+1},M_{s+1}\})=e(G(n,k,s)),$
where $G(n,k,s)$ denotes the complete $k$-partite graph on $n$ vertices consisting of $k-1 $ vertex classes of sizes as equal as possible whose total size is $s$, and one additional vertex class of size $n-s$. They also showed that for any $F$ with chromatic number $k+1$ and a color-critical edge (an edge that will decrease the chromatic number when deleted), $ex(n,\{F,M_{s+1}\})=e(G(n,k,s)).$
Later, Gerbner \cite{gerbner2024turan} gave several results about $ex({n,\{F,M_{s+1}\}})$. For example, he proved that if the chromatic number of $F$ is at least 3 and $n$ is large enough, then $ex(n,\{F,M_{s+1}\})=ex(s,\mathscr{F})+s(n-2)$, where $\mathscr{F}$ is the family of graphs obtained by deleting an independent set from $F$. If $F$ is a bipartite graph, let $p(F)$ denote the smallest possible order of a color class in a proper two-coloring of $F$, then $ex(n,\{F,M_{s+1}\})=(p-1)n+O(1)$, where $p(F)\leq s$.
Chv\'atal and Hanson \cite{chvatal1976degrees} gave the result of maximum star-free graph with bounded matching number, i.e. $ex(n,\{K_{1,t},M_{s+1}\})$. See \cite{chvatal1976degrees} for more details.

In this paper, we consider $ex(n,\{K_{l,t},M_{s+1}\})$ for $t\ge l\geq 2$. Note that if $s\leq l$, then $ex(n,\{K_{l,t},M_{s+1}\})= ex(n,M_{s+1})$ because the matching number of $K_{l,t}$ is $l$. On the other hand, if $n\le 2s$, then $ex(n,\{F,M_{s+1}\})=ex(n,F).$ Hence we just consider the case $s\ge l+1$ and $n\ge 2s+1$. Let
$$\begin{array}{rcl}
F_1(x)&=&(l-1)n+(t-1){x\choose l}+{2(s-x)+1\choose 2}-\lc\frac{x(l-1)}{2}\rc,\\
   F_2(x) &=& (l-1)n+(t-1){x\choose l}+ex(2(s-x)+1,K_{l,t})-\lc\frac{x(l-1)}{2}\rc-(2(s-x)+1)(l-1).
   \end{array}$$ Denote $r_1=\max_{\max\{2s-t+1,2l\}\leq 2x\leq 2s }\{F_1(x)\}$ and $r_2=\max_{2l\leq 2x\leq 2s-t}\{F_2(x)\}$.
   Let $$n(x) = x+(t-1){x\choose l}+2(s-x)+1$$
   and $n_1 = \max_{0\leq x\leq s}n(x)$. It is easy to find that $n_1 = \max\{n(0),n(s)\}$. 
   The following theorems are our main results.
\begin{theorem}\label{thm: ex for k_l,t}
    Let $3\leq l\leq t$ and $n\geq n_1$.
   Then we have
   $$\exkm\geq \max \{r_1,r_2\}.$$
   When $s$ is large enough and $n\geq 2{3s\choose 2}$, we have
   $$\exkm\leq (l-1)n+(t-1){s\choose l}-\lc\frac{s(l-1)}{2}\rc.$$
\end{theorem}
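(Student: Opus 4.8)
The plan is to prove the lower and upper bounds by completely different means: an explicit two-parameter construction for the former, and a counting argument anchored at a small vertex cover for the latter, the latter being where essentially all the difficulty lies.

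For the lower bound I would, for each admissible $x$, build one graph attaining $F_1(x)$ and one attaining $F_2(x)$ and keep the larger. Fix $B$ with $|B|=x$ carrying a near-$(l-1)$-regular graph, so $e(G[B])=\lf\tfrac{x(l-1)}{2}\rf$; join every other vertex to exactly $l-1$ vertices of $B$; for $F_1$ put a clique on $2(s-x)+1$ of the remaining vertices, and for $F_2$ replace that clique by an extremal $K_{l,t}$-free graph of the same order with its edges to $B$ deleted. Finally, for each $l$-set $L\sb B$ add one edge to each of $t-1$ chosen vertices so their $B$-neighbourhood becomes exactly $L$, contributing $(t-1)\binom{x}{l}$ edges while keeping every $l$-set of $B$ with at most $t-1$ common neighbours. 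One checks $K_{l,t}$-freeness (a copy would need an $l$-set with $t$ common neighbours; the degree-$(l-1)$ bulk cannot supply one and, since $2(s-x)+1\le t$ in the $F_1$ range, neither can the clique) and $\nu\le s$ (any matching uses at most $x$ edges meeting $B$ and at most $s-x$ inside the clique/core). Summing edges gives $F_1(x)$, resp. $F_2(x)$, and optimising over $x$ gives $\max\{r_1,r_2\}$.

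For the upper bound the heart of the matter is to find a set $B$, $|B|=\beta\le s$, carrying almost every edge. I would take $B$ to be the Gallai--Edmonds set (built, if preferred, from a maximum matching by an augmenting-path argument), so that $V\se B$ splits into \emph{singletons}, whose neighbours all lie in $B$, and a core $W$; there are no edges inside the singleton set nor between it and $W$, and since every component of $W$ spends at least a third of its vertices on the matching, $w:=|W|\le 3(s-\beta)$, which vanishes exactly when $\beta=s$. The decisive step counts degrees into $B$: as every $l$-subset of $B$ has at most $t-1$ common neighbours, $\sum_{v\in V}\binom{d_B(v)}{l}\le(t-1)\binom{\beta}{l}$ with $d_B(v)=|N(v)\cap B|$. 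Feeding in the elementary inequality $d\le(l-1)+\binom{d}{l}$, valid for all $d\ge0$, to control the $B$-to-outside edges and the within-$B$ edges against one and the same budget, and adding $e(G[W])\le\binom{w}{2}$, I expect to arrive at
\[
e(G)\le (l-1)(n-\beta)+\lf\tfrac{(l-1)\beta+S_B}{2}\rf+(t-1)\tbinom{\beta}{l}+\tbinom{3(s-\beta)}{2},\qquad S_B=\sum_{v\in B}\tbinom{d_B(v)}{l}
\]

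It then remains to show the right-hand side is maximised at $\beta=s$ (hence $w=0$), where it collapses to exactly $(l-1)n+(t-1)\binom{s}{l}-\lc\tfrac{s(l-1)}{2}\rc$; the ceiling appears precisely because $(l-1)s-\lf\tfrac{(l-1)s}{2}\rf=\lc\tfrac{(l-1)s}{2}\rc$. I expect the main obstacle to be exactly this exact-constant bookkeeping. A naive estimate gives $e(G)\le(l-1)n+(t-1)\binom{s}{l}+O(s^2)$, overshooting the target by a quadratic term, so one cannot bound the within-$B$ edges, the core edges and the cross-edges separately; the argument closes only because the single count $\sum_v\binom{d_B(v)}{l}\le(t-1)\binom{\beta}{l}$ charges the within-$B$ edges to the same budget as the cross-edges, while the structural bound $w\le3(s-\beta)$ kills the core in the tight case. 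For $\beta<s$ the slack $(t-1)[\binom{s}{l}-\binom{\beta}{l}]$, of order $s^{l-1}$, beats the core contribution $\binom{3(s-\beta)}{2}=O(s^2)$ once $s$ is large---this, together with the room needed to carry the counting, is where the hypotheses that $s$ is large and $n\ge2\binom{3s}{2}$ are used. Throughout, keeping floors and ceilings honest is what pins the constant to $\lc\tfrac{s(l-1)}{2}\rc$ rather than its floor.
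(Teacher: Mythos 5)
Your proposal follows essentially the same route as the paper: the identical two constructions for the lower bound (near-$(l-1)$-regular graph on $B$, $t-1$ private common neighbours per $l$-subset of $B$, a clique of order $2(s-x)+1$ when that is at most $t$ and an extremal $K_{l,t}$-free core with no edges to $B$ otherwise), and for the upper bound the same scheme of anchoring at a Tutte--Berge/Gallai--Edmonds set $B$, counting $l$-horns via $\sum_v\binom{d_B(v)}{l}\le(t-1)\binom{\beta}{l}$, converting degrees with $\binom{d}{l}\ge d-(l-1)$, and optimising over $\beta$. One correction is needed: your displayed bound is mis-transcribed. Since $S_B$ draws on the same horn budget, the cross-edge contribution should be $(l-1)(n-\beta)+(t-1)\binom{\beta}{l}-S_B$, not $(l-1)(n-\beta)$ with a separate additive $(t-1)\binom{\beta}{l}$; as written, at $\beta=s$ the display gives $(l-1)n+(t-1)\binom{s}{l}-\lceil\tfrac{(l-1)s}{2}\rceil+\lceil S_B/2\rceil$, which does not collapse to the claimed value unless $S_B=0$. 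With the $-S_B$ restored, the terms combine (using integrality) to $(l-1)n+(t-1)\binom{\beta}{l}-\lceil\tfrac{(l-1)\beta}{2}\rceil+e(G[W])$, exactly the paper's Lemma 3.4. Two of your choices in fact streamline the paper's argument: applying $d\le(l-1)+\binom{d}{l}$ for all $d\ge 0$ lets you treat every $1\le\beta\le s$ uniformly, avoiding both the paper's separate case $\beta\le l-1$ and its edge-maximality argument forcing $d_B(v)\ge l-1$; and bounding the core by $\binom{3(s-\beta)}{2}$ is cruder than the paper's $\sum_i ex(|C_i|,K_{l,t})\le ex(2(s-\beta)+1,K_{l,t})$ but is still $O((s-\beta)^2)$, which loses to the slack $(t-1)\bigl[\binom{s}{l}-\binom{\beta}{l}\bigr]$ for $l\ge3$ once $s$ is large, as you say.
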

  If $s$ is large enough, we can show that the exact value of $\exkm$ is the upper bound in Theorem \ref{thm: ex for k_l,t}.
   \begin{theorem}\label{thm: exkm when s is large enough}
       Assume $3\leq l\leq t$. If $s$ is large enough and $n\geq n(s)$, then
       $$\exkm= (l-1)n+(t-1){s\choose l}-\lc\frac{s(l-1)}{2}\rc.$$
   \end{theorem}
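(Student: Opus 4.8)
The plan is to obtain Theorem~\ref{thm: exkm when s is large enough} as an essentially immediate consequence of Theorem~\ref{thm: ex for k_l,t}, by showing that under the stronger hypothesis $n\ge n(s)$ the lower and upper bounds of that theorem coincide.

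For the upper bound, observe that the value claimed here is literally the upper bound asserted in Theorem~\ref{thm: ex for k_l,t}. So it suffices to check that its hypotheses ($s$ large and $n\ge 2{3s\choose 2}$) follow from ours. Since $n(s)=s+(t-1){s\choose l}+1$ and $l\ge 3$, the term $(t-1){s\choose l}$ is a polynomial in $s$ of degree $l\ge 3$, whereas $2{3s\choose 2}=\Theta(s^2)$; hence for $s$ large enough $n\ge n(s)>2{3s\choose 2}$, and the upper bound of Theorem~\ref{thm: ex for k_l,t} applies verbatim.

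For the lower bound, I would invoke $\exkm\ge\max\{r_1,r_2\}$ from Theorem~\ref{thm: ex for k_l,t}, which is valid once $n\ge n_1=\max\{n(0),n(s)\}$. As $n(0)=2s+1$ grows linearly while $n(s)$ grows at least cubically, we have $n_1=n(s)$ for $s$ large, so $n\ge n(s)$ suffices. The one computation needed is to evaluate $F_1$ at the right endpoint $x=s$, which lies in the range defining $r_1$ because $2s\le 2s$ and $2s\ge\max\{2s-t+1,2l\}$ (using $t\ge 1$ and $s\ge l$). At $x=s$ the summand ${2(s-x)+1\choose 2}={1\choose 2}=0$, so
$$F_1(s)=(l-1)n+(t-1){s\choose l}-\lc\frac{s(l-1)}{2}\rc,$$
which is exactly the claimed value. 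Therefore $\max\{r_1,r_2\}\ge F_1(s)$ meets the upper bound, and the two estimates together yield equality.

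The only place requiring genuine care is the alignment of the domain hypotheses: one must confirm that the single assumption $n\ge n(s)$ simultaneously forces $n\ge 2{3s\choose 2}$ and $n\ge n_1$. Both follow from the fact that $n(s)$ is dominated by its ${s\choose l}$ term of degree $l\ge 3$, so no substantive obstacle arises — all the real work lives in Theorem~\ref{thm: ex for k_l,t}, and Theorem~\ref{thm: exkm when s is large enough} amounts to recognizing that the extremal choice $x=s$ in $F_1$ closes the gap between the lower and upper bounds.
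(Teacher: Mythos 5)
Your proposal is correct and follows essentially the same route as the paper: both deduce the theorem from Theorem~\ref{thm: ex for k_l,t} by checking that for large $s$ one has $n_1=n(s)\ge 2{3s\choose 2}$ and that $F_1(s)$ equals the claimed value. The only (harmless) difference is that the paper additionally invokes monotonicity of $F_1$ and $F_2$ to show $\max\{r_1,r_2\}=F_1(s)$ exactly, whereas you observe that the inequality $\max\{r_1,r_2\}\ge F_1(s)$ already suffices once the upper bound is in place.
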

 \begin{proof}
 It is easy to check that when $s$ is large enough and $l\geq 3$, $n_1 = n(s)$ and $n(s)\geq 2{3s\choose 2}$.
  Note that $ex(2(s-x)+1,K_{l,t})=O((s-x)^{2-1/t}).$ If  $l\ge 3$, $F_1 (x)$ and $F_2 (x)$ are monotonically increasing when $s$ is large  enough.
  Hence  $$\max \{r_1,r_2\}=F_1(s)=(l-1)n+(t-1){s\choose l}-\lc\frac{s(l-1)}{2}\rc.$$ By Theorem \ref{thm: ex for k_l,t}, we are done.
 \end{proof}

 For $K_{2,t}$ with $t\ge 2$, we can give more specific results.
\begin{theorem}\label{thm: ex for k_2,2 free}
    Let $n\geq {s\choose 2}+s+1$ and $s\geq 12$. Then we have $$\excm=n+{s\choose 2}-\lc\frac{s}{2}\rc.$$
\end{theorem}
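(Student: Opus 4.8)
The plan is to prove the two bounds separately; the lower bound is a short construction and the upper bound is the substance.

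For the lower bound I would exhibit an $n$-vertex graph $G_0$ that is $C_4$-free and has no matching of size $s+1$, with exactly $n+\binom{s}{2}-\lc\frac s2\rc$ edges. Take a set $S=\{v_1,\dots,v_s\}$ of \emph{centers}; for every pair $\{v_i,v_j\}$ attach one new vertex $u_{ij}$ joined precisely to $v_i$ and $v_j$; attach the remaining $n-s-\binom s2$ vertices as pendant leaves to the centers; and finally add a near-perfect matching of $\lf\frac s2\rf$ edges inside $S$. One checks $G_0$ is $C_4$-free because any two vertices have at most one common neighbour (each pair of centers is witnessed only by its unique vertex $u_{ij}$, while leaves and pair-vertices have degree $\le 2$). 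Since $V(G_0)\se S$ is independent, $S$ is a vertex cover of size $s$, so $\nu(G_0)\le s$. Counting edges gives $2\binom s2+(n-s-\binom s2)+\lf\frac s2\rf=n+\binom s2-\lc\frac s2\rc$, where $n\ge\binom s2+s+1$ is exactly what guarantees at least one leaf.

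For the upper bound, let $G$ be $C_4$-free with $\nu(G)\le s$. I would run the Gallai--Edmonds decomposition $V=A\cup C\cup D$ (equivalently, analyse a maximum matching together with its augmenting-path constraints) to isolate a \emph{core}. Writing $a=|A|\le\nu\le s$ and splitting $D$ into its singleton components $D_1$ and the rest $D_{\ge 2}$, the identity $\nu=a+\frac{|C|}2+\frac{|D|-c}2$ forces $|C|+|D_{\ge 2}|=O(s-a)$, so the set $R=A\cup C\cup D_{\ge 2}$ is small while $D_1$ is an independent set all of whose neighbours lie in $A$. Hence every edge is either inside $R$ or between $A$ and $D_1$, and $e(G)=e(G[R])+e(A,D_1)$. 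The heart of the argument is a double count of the \emph{cherries} (paths of length two) whose two endpoints lie in $A$: because $G$ is $C_4$-free any two vertices of $A$ have at most one common neighbour, so there are at most $\binom a2\le\binom s2$ such cherries. Splitting them by the location of their centre (in $D_1$, in $A$, or in $C\cup D_{\ge 2}$) and using $(d-1)^+\le\binom d2$ yields
\[
e(A,D_1)\le |D_1|+\binom a2-\sum_{v\in A}\binom{d_{G[A]}(v)}2-\sum_{v\in C\cup D_{\ge 2}}\binom{|N(v)\cap A|}2 .
\]

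Combining this with $e(G[R])=e(G[A])+e(A,C\cup D_{\ge 2})+e(G[C\cup D_{\ge 2}])$, the elementary inequality $e(G[A])-\sum_{v\in A}\binom{d_{G[A]}(v)}2\le\lf\frac a2\rf$ (maximised exactly when $G[A]$ is a matching, which is where the ceiling term is born), and $|D_1|=n-|R|$, everything telescopes to $\excm\le n+\binom a2-\lc\frac a2\rc+e(G[C\cup D_{\ge 2}])$. It then remains to show the right-hand side is at most $n+\binom s2-\lc\frac s2\rc$ for every $0\le a\le s$. When $a=s$ the budget identity forces $C=D_{\ge 2}=\0$, the remainder vanishes, and the bound holds with equality, matching the construction; for $a<s$ there is $\binom s2-\binom a2=\frac{(s-a)(s+a-1)}2$ of slack against a remainder that is only $O\big((s-a)^{3/2}\big)$ by \eqref{eq: ex(n,c4)<}. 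The main obstacle is precisely this last optimisation: the crude K\H{o}v\'ari--S\'os--Tur\'an estimate on $e(G[C\cup D_{\ge 2}])$ is too wasteful near $a\approx 0$ and only yields a large threshold, so to bring the hypothesis down to $s\ge 12$ one must bound $e(G[C\cup D_{\ge 2}])$ more sharply, exploiting that this subgraph is itself $C_4$-free with matching number at most $s-a$ and sends few edges into $A$. Tracking the parity (the $\lc\cdot\rc$ terms) through this optimisation is the remaining technical point.
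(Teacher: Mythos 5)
Your outline follows essentially the same strategy as the paper (the paper uses the Tutte--Berge witness set $X$ in place of the Gallai--Edmonds set $A$, and your ``cherry'' double count is exactly its count of $2$-horns of $X$), and the structural part is sound: the reduction to $e(G)\le n+\binom a2-\lc\frac a2\rc+e(G[C\cup D_{\ge 2}])$ checks out, as does the lower-bound construction. But there is a genuine gap exactly where you flag one: the final optimisation over $a$. The theorem is claimed for the explicit threshold $s\ge 12$, and your argument, as written, only yields ``$s$ sufficiently large'': with $|C\cup D_{\ge 2}|=O(s-a)$ and the K\H{o}v\'ari--S\'os--Tur\'an bound, the remainder $e(G[C\cup D_{\ge 2}])$ is not dominated by the slack $\binom s2-\binom a2$ for small $a$ when $s=12$ (e.g.\ at $a=0,1$ the residual graph can have up to about $2s+1$ vertices and the $C_4$ bound exceeds $\binom{12}{2}-6$). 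Saying ``one must bound $e(G[C\cup D_{\ge 2}])$ more sharply'' names the problem but does not solve it, so the stated theorem is not proved.

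The missing ingredient, which the paper supplies, is twofold. First, the residual components can be merged at cut vertices into a single $C_4$-free graph (the paper's Lemma~\ref{con-lem-1}), and the Tutte--Berge budget $\sum_i\lf|V(C_i)|/2\rf\le s-x$ forces this merged graph to have at most $2(s-x)+1$ vertices --- a linear bound with constant $2$, not just $O(s-x)$. Second, one substitutes the explicit estimate $ex(m,C_4)\le\frac m4(1+\sqrt{4m-3})$ and observes that the resulting function of $x$ is convex, so the maximum over $2\le x\le s$ is attained at an endpoint; the case $x=1$ is handled separately (the paper's Lemma~\ref{lem: calculate ex(n,klt,l-1)}). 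Comparing the two endpoint values is then a finite numerical check that goes through precisely for $s\ge 12$. Without the convexity reduction and the sharp $2(s-x)+1$ vertex count, the endpoint comparison cannot be carried out at $s=12$, so you should either import these steps or restate your result with an unspecified large threshold.
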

\begin{theorem}\label{thm: ex for k_2,t free}
   If $n\geq {s\choose 2}+s+1$, $t\ge 3$ and $s$ is large enough, then we have$$ex(n,\{K_{2,t},M_{s+1}\})=n+(t-1){s\choose 2}-\lc\frac{s}{2}\rc.$$
\end{theorem}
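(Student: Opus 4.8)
The plan is to establish the two bounds directly for $l=2$, since Theorems~\ref{thm: ex for k_l,t} and \ref{thm: exkm when s is large enough} are proved only for $l\ge 3$ and under the heavier hypothesis $n\ge 2\binom{3s}{2}$. The lower bound will be the $x=s$ member of the family behind $r_1$, and the upper bound a Gallai--Edmonds argument in which the matching constraint is used to show that the Gallai--Edmonds set $A$ satisfies $|A|\le s$; this is precisely what replaces the naive ``cover of size $2s$'' budget $(t-1)\binom{2s}{2}$ by the correct $(t-1)\binom{s}{2}$.

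For the lower bound, I would take a set $S$ with $|S|=s$ carrying a near-perfect matching, declare the other $n-s$ vertices independent, and join them to $S$ so that each of the $\binom{s}{2}$ pairs inside $S$ gets exactly $t-1$ common neighbours of degree $2$, every remaining vertex of $V\setminus S$ having a single neighbour in $S$. Then $S$ is a vertex cover, so $\nu\le s$; no two vertices have $t$ common neighbours, so the graph is $K_{2,t}$-free; and the number of edges is $\lfloor s/2\rfloor+2(t-1)\binom{s}{2}+\bigl(n-s-(t-1)\binom{s}{2}\bigr)=n+(t-1)\binom{s}{2}-\lceil s/2\rceil$. As in the proof of Theorem~\ref{thm: exkm when s is large enough}, I would check that for $s$ large $\max\{F_1(x),F_2(x)\}$ is attained at $x=s$, so that this construction is optimal among the candidates.

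For the upper bound, let $G$ be $K_{2,t}$-free with $\nu(G)\le s$ and take its Gallai--Edmonds decomposition $V=D\cup A\cup C$: the components of $G[D]$ are factor-critical, $G[C]$ has a perfect matching, and there are no $C$--$D$ edges. From $\nu(G)=\tfrac12\bigl(n-c(D)+|A|\bigr)$ and $c(D)\le|D|=n-|A|-|C|$ one gets $2|A|+|C|\le 2\nu(G)\le 2s$, hence $|A|\le s$. For $v\in D\cup C$ write $a_v=|N(v)\cap A|$; since every pair inside $A$ has at most $t-1$ common neighbours,
\[\sum_{v\in D\cup C}\binom{a_v}{2}+\sum_{w\in A}\binom{\deg_A(w)}{2}\le (t-1)\binom{|A|}{2}.\]
Using $a_v\le 1+\binom{a_v}{2}$ and $\tfrac12 d-\binom{d}{2}\le\tfrac12$, this gives
\[e(A)+e(A,D\cup C)\le |D\cup C|+(t-1)\binom{|A|}{2}+\Bigl\lfloor\tfrac{|A|}{2}\Bigr\rfloor,\]
and therefore $e(G)\le (n-|A|)+(t-1)\binom{|A|}{2}+\lfloor|A|/2\rfloor+e(D)+e(C)$. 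When $|A|=s$ we have $|C|=0$ and $c(D)=|D|$, so $D$ is independent, $e(D)=e(C)=0$, and the bound is exactly $n+(t-1)\binom{s}{2}-\lceil s/2\rceil$.

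The main obstacle is the residual case $|A|<s$. Here I must bound $e(D)+e(C)$ by the slack $(t-1)\bigl(\binom{s}{2}-\binom{|A|}{2}\bigr)+\lceil|A|/2\rceil-\lceil s/2\rceil$. The inequality $2|A|+|C|\le 2s$ confines $C$ and all non-singleton components of $D$ to $O(s-|A|)$ vertices, so the $K_{2,t}$-free bound $ex(m,K_{2,t})=O(m^{2-1/t})$ gives $e(D)+e(C)=O\bigl((s-|A|)^{3/2}\bigr)$, while the slack is $\Omega\bigl((s-|A|)s\bigr)$; thus for $s$ large the slack dominates. Turning this into a clean inequality valid for every $|A|$, and determining the exact threshold on $n$ beyond which the lower-bound construction is realizable so that the two bounds meet, is where the delicate work lies.
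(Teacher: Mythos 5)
Your proposal is correct in substance and is, at bottom, the same argument as the paper's: a witness set for the matching constraint, a double count of common neighbourhoods of pairs inside it (the paper's ``$2$-horns''), the bound $ex(m,K_{2,t})=O(\sqrt{t}\,m^{3/2})$ for the leftover components, and the observation that the quadratic main term beats the $3/2$-power error once $s$ is large. The packaging differs in two ways. First, you take the Gallai--Edmonds set $A$, whereas the paper takes a maximum-size set $X$ with $|X|+\sum_i\lfloor|V(C_i)|/2\rfloor\le s$ (Lemma \ref{thm: TB theorem}); these are interchangeable here, since the Gallai--Edmonds $A$ is itself such a set and your inequality $2|A|+|C|\le 2s$ is exactly what that lemma supplies. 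Second, where you treat $|A|=s$ exactly and then argue that for $|A|<s$ the slack $(t-1)\bigl(\binom{s}{2}-\binom{|A|}{2}\bigr)=\Omega\bigl(t(s-|A|)s\bigr)$ dominates the $O\bigl(\sqrt{t}\,(s-|A|)^{3/2}\bigr)$ contribution of $e(D)+e(C)$, the paper fixes $x=|X|$, writes the resulting bound as an explicit function $h(x)$, and uses convexity to reduce to comparing $h(2)$ (plus a separate $x=1$ case via Lemma \ref{lem: calculate ex(n,klt,l-1)}) with $h(s)$ --- the same comparison in different clothes. The step you leave open, turning the residual case into a clean inequality for every $|A|<s$, does go through uniformly (the error is at most $C\sqrt{t}(s-|A|)\sqrt{s}$ against a slack of order $t(s-|A|)s$), so it is unfinished bookkeeping rather than a gap. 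One further remark: your worry about the threshold on $n$ is legitimate --- your construction, like the paper's Construction \ref{con: k_l,t-free 1} with $x=s$, needs $n\ge (t-1)\binom{s}{2}+s+1$, which for $t\ge 3$ exceeds the stated $\binom{s}{2}+s+1$; this discrepancy is present in the paper itself, whose proof computes $n(s)$ as if $t=2$.
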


This paper is organized as follows. In section~\ref{sec: preliminary}, we give basic definitions, notations, the construction of extremal graphs and lemmas.
In section \ref{sec: maximum k_lt free}, we will prove Theorem \ref{thm: ex for k_l,t}; In section \ref{sec: ex for k2t free}, we will show Theorems \ref{thm: ex for k_2,2 free} and \ref{thm: ex for k_2,t free}.

\section{Preliminary}\label{sec: preliminary}

Let $G = (V, E)$ be a graph on $n$ vertices. For  $S\subseteq V(G)$, we denote $N_S(v)=\{u\in S|uv\in E(G)\}$ and $d_S(v)=|N_S(v)|$. We use $G[S]$ to denote the subgraph of $G$ induced by $S$ and will used $G-S$ to denote $G[V(G)\setminus S]$.
Let $S_1,S_2\subseteq V(G)$ with $S_1\cap S_2=\emptyset$. Denote $(S_1,S_2)=\{uv\in E(G)|u\in S_1,v\in S_2\}$ and $e(S_1,S_2)=|(S_1,S_2)|$.

Let  $X\subseteq V(G)$ and $T\subseteq X$ with $|T|=l$.
 We call $T$ a {\bf $l$-horn of $X$} if there is  $v\in V(G)\setminus T$ such that $T\subseteq N(v)$ and we call $v$ the center of $T$. Denote $C(T)=\{v~|~v\mbox{~is a center of $T$}\}$. If $G$ is $K_{l,t}$-free, then $|C(T)|\le t-1$ for any
$l$-horn $T$ of $X$.

The following theorem is about the matching number of a graph.
\begin{theorem}[Tutte-Berge Theorem~\cite{lovasz2009matching}]\label{TB}
Let $G$ be a graph. Then the matching number of $G$ is
$$\frac{1}{2}\min_{X\subseteq V(G)}(|V(G)|+|X|-odd(G-X)),$$where $odd(G-X)$ is the number of odd components of $G-X$.
\end{theorem}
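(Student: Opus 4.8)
The plan is to prove the equivalent \emph{deficiency form} of the statement. Write $\nu(G)$ for the matching number and $n=|V(G)|$, and set $d(G)=\max_{X\subseteq V(G)}\bigl(odd(G-X)-|X|\bigr)$. Since $\frac12\min_{X}(n+|X|-odd(G-X))=\frac12\bigl(n-d(G)\bigr)$, the Tutte--Berge formula is equivalent to the identity
$$ n-2\nu(G)=d(G). $$
The inequality $n-2\nu(G)\ge d(G)$ is the easy half, proved by a counting argument: fix a maximum matching $M$ and any $X\subseteq V(G)$. Every odd component $C$ of $G-X$ contains a vertex that $M$ either leaves uncovered or matches to a vertex of $X$, and at most $|X|$ vertices in total can be matched into $X$; hence $M$ leaves at least $odd(G-X)-|X|$ vertices uncovered, so $n-2\nu(G)\ge odd(G-X)-|X|$. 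Taking the maximum over $X$ gives $n-2\nu(G)\ge d(G)$.

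For the reverse inequality I would pass to an auxiliary graph and invoke Tutte's $1$-factor theorem. First a parity remark: for every $X$ the components of $G-X$ contain $n-|X|$ vertices in total, so the number of odd components satisfies $odd(G-X)\equiv n-|X|\pmod 2$, whence $odd(G-X)-|X|\equiv n\pmod 2$; therefore $d(G)\equiv n\pmod 2$ and $n+d(G)$ is even. Now form $G'$ by adjoining a set $U$ of $d(G)$ new vertices, each made adjacent to every other vertex of $G'$ (so $U$ is universal, hence a clique complete to $V(G)$). Then $|V(G')|=n+d(G)$ is even.

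Next I would verify that $G'$ satisfies Tutte's condition $odd(G'-X')\le|X'|$ for all $X'\subseteq V(G')$, by a case split. If $U\not\subseteq X'$, choose $u\in U\setminus X'$; being universal, $u$ is adjacent to all of $V(G')\setminus X'$, so $G'-X'$ is connected when nonempty and $odd(G'-X')\le 1\le|X'|$ (and if $X'=\emptyset$ then $G'$ has even order with $odd(G')=0$). If $U\subseteq X'$, write $X'=U\cup X$ with $X\subseteq V(G)$; then $G'-X'=G-X$, so $odd(G'-X')=odd(G-X)\le d(G)+|X|=|U|+|X|=|X'|$ by the definition of $d(G)$. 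Since $odd(G'-X')\equiv|V(G')|-|X'|\equiv|X'|\pmod 2$, these bounds suffice, and Tutte's theorem yields a perfect matching $M'$ of $G'$. Its edges lying inside $V(G)$ form a matching $M$ of $G$, and the only vertices of $V(G)$ that $M$ misses are those matched by $M'$ to $U$, of which there are at most $|U|=d(G)$. Hence $M$ covers at least $n-d(G)$ vertices, giving $\nu(G)\ge\frac12(n-d(G))$, which together with the easy half forces $n-2\nu(G)=d(G)$.

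The main obstacle is precisely the reverse inequality, which I am reducing to Tutte's $1$-factor theorem; the two delicate points are the parity computation guaranteeing that $|V(G')|$ is even and the case analysis verifying Tutte's condition, where the universality (clique structure) of $U$ is exactly what keeps $G'-X'$ connected whenever $U\not\subseteq X'$. If a fully self-contained treatment is required, the only genuinely nontrivial external input is Tutte's theorem itself, which can be supplied by the standard short induction (e.g.\ Lov\'asz's proof in \cite{lovasz2009matching}); everything else in the argument above is elementary counting and bookkeeping.
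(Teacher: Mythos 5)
The paper does not prove this statement at all: it is quoted as a known classical result with a citation to Lov\'asz--Plummer, and the paper's only original content connected to it is the derivation of its Lemma 2.2 (the floor-sum reformulation) \emph{from} the Tutte--Berge formula. So there is no internal proof to compare against; what you have written is a genuine addition. Your argument is the correct, standard derivation of the Berge deficiency formula from Tutte's $1$-factor theorem, and I find no gaps in it: the reformulation $n-2\nu(G)=d(G)$ is an exact restatement of the theorem; the easy inequality $n-2\nu(G)\ge d(G)$ is correctly obtained by noting that each odd component of $G-X$ either contains an $M$-exposed vertex or sends a matching edge into $X$, and $X$ can absorb at most $|X|$ such edges; the parity observation $d(G)\equiv n\pmod 2$ is what makes $|V(G')|$ even; and the case analysis for Tutte's condition in $G'$ is complete (the subcase $U\subseteq X'$ is exactly where the definition of $d(G)$ is used, and the universal vertices handle the rest, including the degenerate case $d(G)=0$ where only that subcase occurs). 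The one dependency you correctly flag is that Tutte's theorem itself is taken as external input; since the paper itself outsources the whole of Tutte--Berge to the literature, your proof strictly reduces the black box from the Berge formula down to the $1$-factor theorem, which is a reasonable trade: it is self-contained elementary counting on top of a weaker and more standard cited result.
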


By
 Theorem  \ref{TB}, we  have the following result which describes a helpful property of graphs with bounded matching number.
\begin{lemma}\label{thm: TB theorem}
     Let $G$ be a graph with $n$ vertices. Then the matching number of $G$ is at most $s$ if and only if there is a subset $X\subseteq V(G)$ such that
     \begin{equation}\label{eq: TB inequaulity}
         |X|+\sum_{i=1}^m \left\lfloor\frac{|V(C_i)|}{2}\right\rfloor\leq s,
     \end{equation}
      where $C_1,\ldots,C_m$ are  the components of $G-X $.
\end{lemma}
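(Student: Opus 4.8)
The plan is to show that for every fixed $X\subseteq V(G)$ the left-hand side of \eqref{eq: TB inequaulity} is literally equal to the quantity minimized in the Tutte--Berge formula, so that the lemma becomes a restatement of Theorem~\ref{TB}. I would fix an arbitrary $X\subseteq V(G)$ and let $C_1,\dots,C_m$ be the components of $G-X$. Because these components partition $V(G)\setminus X$, we have $|V(G)|=|X|+\sum_{i=1}^m|V(C_i)|$. Using the elementary identity $\lfloor k/2\rfloor=\frac12(k-\varepsilon_k)$, where $\varepsilon_k\in\{0,1\}$ is the parity of $k$, and summing over the components gives
$$\sum_{i=1}^m\left\lfloor\frac{|V(C_i)|}{2}\right\rfloor=\frac12\left(\sum_{i=1}^m|V(C_i)|-odd(G-X)\right)=\frac12\bigl(|V(G)|-|X|-odd(G-X)\bigr),$$
since $\sum_i\varepsilon_{|V(C_i)|}$ counts exactly the odd components. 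Adding $|X|$ yields the key identity
$$|X|+\sum_{i=1}^m\left\lfloor\frac{|V(C_i)|}{2}\right\rfloor=\frac12\bigl(|V(G)|+|X|-odd(G-X)\bigr).$$

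With this identity, Theorem~\ref{TB} states that the matching number of $G$ equals $\min_{X\subseteq V(G)}\bigl(|X|+\sum_{i=1}^m\lfloor|V(C_i)|/2\rfloor\bigr)$. Hence the matching number is at most $s$ if and only if this minimum is at most $s$, equivalently if and only if some $X$ attains value at most $s$ in \eqref{eq: TB inequaulity}. A minimizing $X$ witnesses the ``only if'' direction, and any $X$ satisfying \eqref{eq: TB inequaulity} bounds the minimum, hence the matching number, by $s$, giving the ``if'' direction.

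I do not anticipate a real obstacle: the entire content is the single algebraic identity above, after which the statement is a cosmetic reformulation of the Tutte--Berge Theorem. The only point needing care is the bookkeeping of the parity correction $\sum_i\varepsilon_{|V(C_i)|}=odd(G-X)$ --- namely that even components contribute nothing while each odd component contributes exactly one --- together with the trivial observation that the minimization in Theorem~\ref{TB} ranges over all $X$, including $X=\emptyset$.
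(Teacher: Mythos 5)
Your proposal is correct and follows essentially the same route as the paper: both directions reduce to the algebraic identity $|X|+\sum_i\lfloor|V(C_i)|/2\rfloor=\tfrac12\bigl(|V(G)|+|X|-odd(G-X)\bigr)$ and then invoke the Tutte--Berge formula. Your version is marginally cleaner in that it proves the identity once and applies it to both implications, whereas the paper carries out the arithmetic separately for each direction, but the content is identical.
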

\begin{proof} Assume the matching number of $G$ is at most $s$. By
 Theorem  \ref{TB}, there is $X\subseteq V(G)$ such that $|V(G)|+|X|-odd(G-X)\le 2s$. Let $odd(G-X)=t$ and $C_1,\ldots,C_m$ be  the components of $G-X $. Assume $C_1,\ldots,C_t$ are the odd components of $G-X$. Let $|X|=x$ and $|V(C_i)|=c_i$ for $1\le i\le m$. Then we have $n+x-t\le 2s$ and $x+\sum_{i=1}^mc_i=n$. Then
$$|X|+\sum_{i=1}^m \left\lfloor\frac{|V(C_i)|}{2}\right\rfloor=\frac{n-x}{2}-\frac{t}{2}+x\le s.$$

Assume there is a subset $X\subseteq V(G)$ such that $|X|+\sum_{i=1}^m \left\lfloor\frac{|V(C_i)|}{2}\right\rfloor\leq s$. Then $|V(G)|+|X|-odd(G-X)\le s$. By Theorem  \ref{TB}, the matching number of $G$ is at most $s$.
\end{proof}

Now we describe two constructions based on Lemma \ref{thm: TB theorem}. Let $s,j,t,n$ be integers such that $n\ge 2s+1$, $t\ge l\ge 2$ and $s\ge l+1$.
\begin{construction}\label{con: k_l,t-free 1}
    Let $x,n,l,s$ be  integers satisfying $s\ge x\geq l\geq 2$, $2(s-x)+1\leq t$ and  $n\geq x+(t-1){x\choose l}+2(s-x)+1 $. Let $X$ (resp. $S$) be a $K_{1,l}$-free graph (resp. $K_{l,t}$-free graph) with $|V(X)|=x$ and $e(X)=ex(x,K_{1,l})$ (resp. $|V(S)|=2(s-x)+1$ and $e(S)=ex(2(s-x)+1,K_{l,t})$). Set $\mathscr{F}:=\{F\subset V(X)| |F|=l \}$. For each $F\in \mathscr{F}$, we add a vertex set
    $V_F=\{v_{F_1},\ldots,v_{F_{t-1}}\}$ consisting of
    $t-1$ new isolated vertices and connect each vertex in $F$ with all $v_{F_i}$ for $i=1,\dots,t-1$. Arbitrarily choose  $F_0\subset V(X)$ such that $|F_0|=l-1$. Then we connect all vertices of $S$ to each vertices in $F_0$. Finally, we add a new vertex set $U$ consisting of $n-x-(t-1){x\choose l}-2(s-x)-1$ isolated vertices and
        connect each vertices in $F_0$ with each vertex in $U$. We denote the obtained graph by $G_1^x$.
\end{construction}
Since $2(s-x)+1\leq t$, we have $S$ is a complete graph which implies $e(S)={2(s-x)+1\choose 2}$. From the Construction \ref{con: k_l,t-free 1}, $V(G_1^x)=V(X)\cup (\cup_{F\in \mathscr{F}}V_F)\cup V(S)\cup U$ and
\begin{equation}\label{eq: con1 of K_l,t}
    e(G_1^x)=(l-1)n+(t-1){x\choose l}-\lc\frac{x(l-1)}{2}\rc+{2(s-x)+1\choose 2}=F_1(x).
\end{equation}

\begin{construction}\label{con: k_l,t-free 2}Let $x,n,l,s$ be  integers satisfying $s\ge x\geq l\geq 2$, $2(s-x)+1\geq t+1$ and  $n\geq x+(t-1){x\choose l}+2(s-x)+1 $. We have the same construction as the Construction \ref{con: k_l,t-free 1}, but deleting all the edges between $S$ and $X$. We denote the obtained graph by $G_2^x$.
\end{construction}
Obviously, we have
\begin{equation}\label{eq: con2 of K_l,t}
    e(G_2^x)=(l-1)n+(t-1){x\choose l}-\lc\frac{x(l-1)}{2}\rc+ex(2(s-x)+1,K_{l,t})-(2(s-x)+1)(l-1)=F_2(x).
\end{equation}

\noindent{\bf Remark} In the Construction \ref{con: k_l,t-free 2}, we delete all the edges between $S$ and $X$ to ensure that there are no copy of $K_{l,t}$ in $G_2^x$. In fact,  there are some edges could be added between $S$ and $X$. As a result, the lower bound we propose is not tight. We speculate that when $s$ is small, the extremal graph is difficult to give an accurate description.

\begin{figure}[htbp]
    \begin{minipage}[t]{0.5\linewidth}
        \centering
        \includegraphics[width=0.8\textwidth]{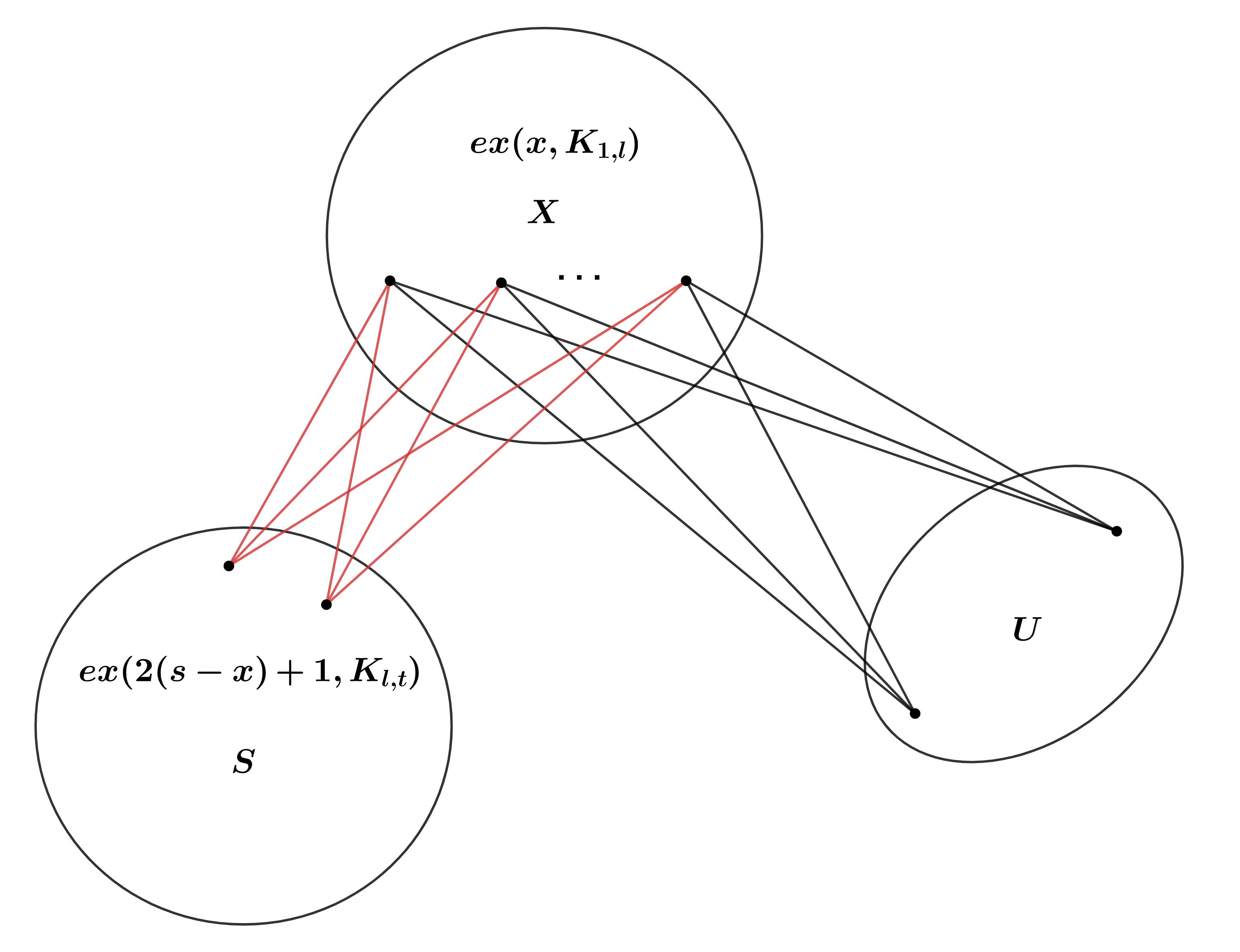}
        \caption{Construction \ref{con: k_l,t-free 1}}
    \end{minipage}%
    \begin{minipage}[t]{0.5\linewidth}
        \centering
        \includegraphics[width=0.8\textwidth]{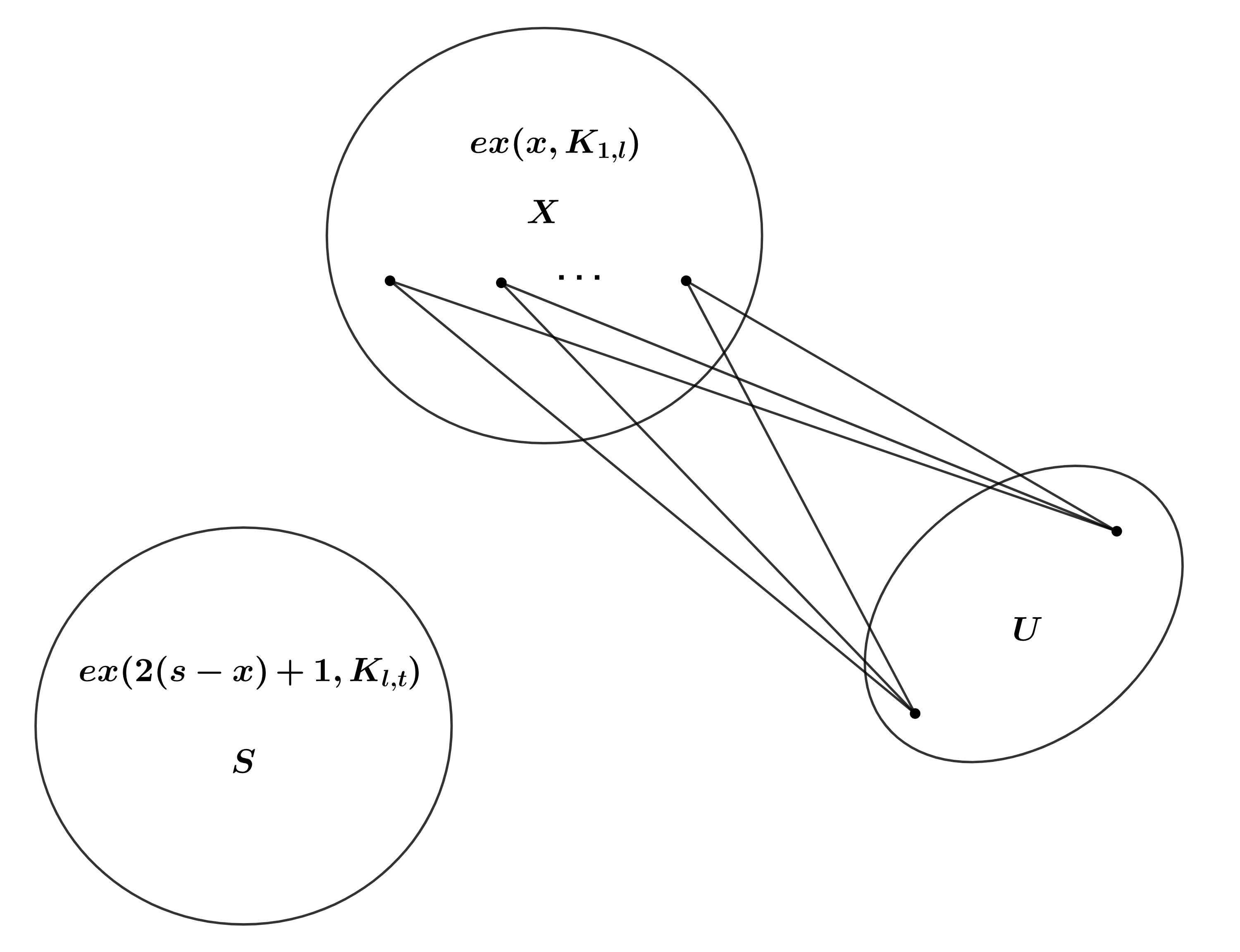}
        \caption{Construction \ref{con: k_l,t-free 2}}
    \end{minipage}
\end{figure}
\begin{lemma}\label{con-lem}
 $G_1^x$ and $G_2^x$ are $\{K_{l,t},M_{s+1}\}$-free.
\end{lemma}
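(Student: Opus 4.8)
The plan is to verify the two defining properties separately. For $K_{l,t}$-freeness I will use the reformulation underlying the notion of an $l$-horn: a graph is $K_{l,t}$-free if and only if every $l$-subset $A$ of its vertex set has at most $t-1$ common neighbours (equivalently, $|C(T)|\le t-1$ for every $l$-horn $T$). So it suffices to bound, for every $l$-set $A$, the number of vertices adjacent to all of $A$. First I would record the elementary features of the building blocks: $X$ has maximum degree at most $l-1$ (being $K_{1,l}$-free), each $V_F$ consists of $t-1$ vertices whose only neighbours are the $l$ vertices of $F$, each vertex of $U$ has neighbourhood exactly $F_0$ with $|F_0|=l-1<l$, and in $G_1^x$ the clique $S$ has order $2(s-x)+1\le t$.

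Then I would run a case analysis on an arbitrary $l$-set $A$ according to where its vertices lie. If $A$ meets $U$, every common neighbour lies in $F_0$, so there are at most $l-1<t$ of them. If $A$ meets some $V_F$, every common neighbour lies in $F$, giving at most $l$; I would rule out equality (possible only when $l=t$) by showing it forces $A\subseteq V_F$, contradicting $|V_F|=t-1$. If $A\subseteq V(X)$, then $A$ is some $F\in\mathscr{F}$, the maximum-degree bound kills all common neighbours inside $V(X)$, and only the $t-1$ vertices of $V_F$ survive.

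The delicate case, and the one I expect to be the main obstacle, is $A\subseteq V(X)\cup S$ with $A\cap S\neq\emptyset$ in $G_1^x$: since $F_0$ is joined to all of $S$ and $S$ is a clique, a priori both $F_0$ and many $S$-vertices could be common neighbours. Here I would first argue that every common neighbour lies in $F_0\cup S$, then split on whether $A\cap V(X)\subseteq F_0$. If not, no $S$-vertex can be adjacent to all of $A$, leaving at most $l-1$ common neighbours. If so, writing $a=|A\cap S|\ge 1$, the admissible vertices of $F_0$ number at most $a-1$ and those of $S$ number at most $2(s-x)+1-a$, for a total of at most $(a-1)+(2(s-x)+1-a)=2(s-x)\le t-1$. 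This establishes $K_{l,t}$-freeness of $G_1^x$. For $G_2^x$ the clique $S$ is detached from $X$, so $G_2^x$ is the disjoint union of $S$ (which is $K_{l,t}$-free by construction) and a graph with no $S$-part; since $K_{l,t}$ is connected, the same (now shorter) case analysis applies to the second component.

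Finally, for $M_{s+1}$-freeness I would apply Lemma \ref{thm: TB theorem} with the deletion set $X'=V(X)$. In both $G_1^x$ and $G_2^x$, after deleting $V(X)$ every vertex of each $V_F$ and of $U$ becomes isolated, while $S$ survives as a single component of order $2(s-x)+1$. Hence $|X'|+\sum_i\lfloor |V(C_i)|/2\rfloor = x + (s-x) + 0 = s$, so by Lemma \ref{thm: TB theorem} the matching number is at most $s$, that is, both graphs are $M_{s+1}$-free.
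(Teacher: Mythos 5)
Your proof is correct and follows essentially the same route as the paper: both verify $K_{l,t}$-freeness by bounding common neighbourhoods using the degree constraints of the building blocks (you fix $l$-sets where the paper fixes $t$-horns, a dual but equivalent viewpoint), and both obtain $M_{s+1}$-freeness from Lemma \ref{thm: TB theorem} with deletion set $V(X)$. If anything, your treatment of the delicate $X$--$S$ case, with the explicit count $(a-1)+(2(s-x)+1-a)=2(s-x)\le t-1$, is more careful than the paper's one-line dismissal of that configuration.
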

\begin{proof} We just show that $G_1^x$ is $\{K_{l,t},M_{s+1}\}$-free. By the Construction \ref{con: k_l,t-free 1}, $d(u)=l-1$ for any $u\in U$. If these is a $t$-horn $T$ of $V(G_1^x)$ such that $|C(T)|\ge l$, then $T\cap U=\emptyset$.
  If $T\cap (\cup_{F\in\mathscr{F}}V_F)\not=\emptyset$, then $T\cap (V(X)\cup V(S))\not=\emptyset$ by $|V_F|=t-1$, a contradiction with  $X$ being $K_{1,l}$-free and $d_X(v)=l-1$ for each $v\in V(S)$.  Hence $T\subseteq V(X)\cup V(S)$. Since $X$ (resp. $S$) is  $K_{1,l}$-free (resp. $K_{l,t}$-free),  we have $|C(T)|\le l-1$, a contradiction. So $G_1^x$ is $K_{l,t}$-free.

  Note that $G_1^x[V_F]$ for any $F\in \mathscr{F}$ and $G_1^x[U]$ are empty graphs. By Lemma \ref{thm: TB theorem}, we have the matching number of $G_1^x$ is no more than $s$. We can make the similar proof for $G_2^x$.
\end{proof}

\begin{lemma}\label{con-lem-1}Let $m_1,\ldots,m_s\ge 1$ and $s\ge 2$. Then we have
$$
    \sum_{i=1}^sex(m_i,K_{l,t})\leq ex\left(\sum_{i=1}^sm_i-s+1,K_{l,t}\right).
$$
\end{lemma}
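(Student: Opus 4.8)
The plan is to prove this by a gluing (superadditivity) argument. For each $i$, let $G_i$ be a $K_{l,t}$-free graph on $m_i$ vertices achieving $e(G_i)=ex(m_i,K_{l,t})$, and fix a vertex $v_i\in V(G_i)$. First I would form a single graph $G$ by taking the disjoint union of $G_1,\ldots,G_s$ and then identifying the $s$ chosen vertices $v_1,\ldots,v_s$ into one common vertex $v$. Since the graphs are otherwise vertex-disjoint, this identification creates no multi-edges and loses no edges, so $G$ has exactly $\sum_{i=1}^s m_i-(s-1)=\sum_{i=1}^s m_i-s+1$ vertices and $\sum_{i=1}^s e(G_i)=\sum_{i=1}^s ex(m_i,K_{l,t})$ edges.

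The key step is to verify that $G$ is $K_{l,t}$-free, which I would deduce from the $2$-connectedness of $K_{l,t}$. Indeed, since $2\le l\le t$, the graph $K_{l,t}$ has vertex connectivity $l\ge 2$ and is therefore $2$-connected. By construction $v$ is a cut vertex of $G$: deleting $v$ separates $G$ into the pieces $G_i-v_i$, which lie in pairwise disjoint vertex sets. Suppose for contradiction that $H\subseteq G$ is a copy of $K_{l,t}$. Since $H$ is $2$-connected and has at least $l+t\ge 4$ vertices, the graph obtained from $H$ by deleting $v$ (if $v\in V(H)$) is still connected and avoids $v$; and if $v\notin V(H)$ then $H$ itself is connected and avoids $v$. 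In either case this connected subgraph lies entirely inside a single piece $G_i-v_i$, so $V(H)\subseteq V(G_i)$ (adding back $v$, which corresponds to $v_i$). But then $H$ is a copy of $K_{l,t}$ inside $G_i$, contradicting the choice of $G_i$ as $K_{l,t}$-free.

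Consequently $G$ is a $K_{l,t}$-free graph on $\sum_{i=1}^s m_i-s+1$ vertices, and comparing edge counts yields
$$ex\left(\sum_{i=1}^s m_i-s+1,K_{l,t}\right)\ge e(G)=\sum_{i=1}^s ex(m_i,K_{l,t}),$$
which is exactly the claimed inequality. The degenerate cases cause no trouble: when some $m_i=1$ the corresponding $G_i$ is a single vertex contributing $ex(1,K_{l,t})=0$, consistent with the bookkeeping above. I do not expect a serious obstacle here; the only point demanding care is the $K_{l,t}$-freeness of the glued graph, and this is precisely where the $2$-connectedness of $K_{l,t}$ (valid since $l\ge 2$) is used to confine any would-be copy to a single piece.
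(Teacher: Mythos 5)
Your proof is correct and follows essentially the same route as the paper: glue the extremal graphs $G_1,\ldots,G_s$ by identifying one chosen vertex from each, observe that no edges are lost, and note that the glued graph remains $K_{l,t}$-free. The only differences are cosmetic --- the paper performs the identification pairwise and closes by induction on $s$, and merely asserts the $K_{l,t}$-freeness of the glued graph, whereas you identify all $s$ vertices at once and supply the justification explicitly via the $2$-connectivity of $K_{l,t}$ (valid since $l\ge 2$).
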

\begin{proof} We just show the case $s=2$ and then the result holds by induction.
Since $ex(1,K_{l,t})=0$, the result holds if $m_1=1$ or $m_2=1$. So we assume $m_1,m_2\geq 2$. Let $G_1$ and $G_2$ be two $K_{l,t}$-free graphs on $m_1$ and $m_2$ vertices respectively such that $e(G_i)=ex(m_i,K_{l,t})$ for $i=1,2$ and $V(G_1)\cap V(G_2)=\emptyset$. Choose $u_1\in V(G_1)$ and $u_2\in V(G_2)$. Construct a new graph $G'$ with $V(G')=(V(G_1)\cup V(G_2))\setminus\{u_2\}$ and $E(G)=(E(G_1)\cup E(G_2)\cup \{u_1v|v\in N_{G_2}(u_2)\})\setminus \{u_2v|v\in N_{G_2}(u_2)\}$. Then $G'$ is a $K_{l,t}$-free graph of order  $m_1+m_2-1$ and $e(G')= ex(m_1,K_{l,t})+ex(m_2,K_{l,t})$. Hence  the result holds.
\end{proof}

\section{Proof of Theorem \ref{thm: ex for k_l,t}}\label{sec: maximum k_lt free}

 Let $G$ be a $M_{s+1}$-free graph with $n$ vertices. By Lemma \ref{thm: TB theorem}, there is a subset $X\subseteq V(G)$ such that (\ref{eq: TB inequaulity}) holds. Denote $\mathscr{X}=\{X\subseteq V(G)~|~\mbox{X \text{satisfies (\ref{eq: TB inequaulity})}}\}$
and $x(G)=\max\{|X|~|~X\in \mathscr{X}\}.$
Then $1\leq x(G)\leq s$. Given  $X\in \mathscr{X}$ with $|X|=x(G)$, we use $C_1,\ldots, C_{m}$ to denote the components of $G-X$ and let $I=\{i~|~ |V(C_i)| \geq 2, 1\leq i\leq m\}$.
Set $$\mathscr{G}_x:=\{G~|~ \text{$G$ is $\{K_{l,t},M_{s+1}\}$-free with  $|V(G)|=n$ and $x(G)=x$}\}.$$
 Denote $ex(n,\{K_{l,t},M_{s+1}\},x)=\max_{G\in \mathscr{G}_x}e(G)$. Then we have $$ex(n,\{K_{l,t},M_{s+1}\})=\max_{1\leq x\leq s}ex(n,\{K_{l,t},M_{s+1}\},x).$$ We first give some results about $ex(n,\{K_{l,t},M_{s+1}\},x)$.
\begin{lemma}\label{lem: ex(n,klt,k)< ex(n,klt,l-1)}
    Let $n\geq 2{3s\choose 2}$ and $s\ge l+1\geq 3$. Then for all $1\leq x\leq l-1$, we have $$ex(n,\{K_{l,t},M_{s+1}\},l-1)\geq ex(n,\{K_{l,t},M_{s+1}\},x).$$
\end{lemma}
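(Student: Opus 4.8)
The plan is to sandwich both quantities between the dominant term $x\cdot n$ and the edge count of a single good construction, exploiting that $n\ge 2\binom{3s}{2}$ is enormous compared with every $O(s^2)$ correction term. Since $x=l-1$ makes the statement a trivial equality, I only need to handle $1\le x\le l-2$, and there the two linear-in-$n$ main terms already differ by a full $n$.

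For the upper bound on $ex(n,\{K_{l,t},M_{s+1}\},x)$ I would take any $G\in\mathscr{G}_x$ with a maximum witness $X$, so $|X|=x$, and partition $V(G)$ into $X$, the set $W\setminus X$ of the nontrivial components $C_i$ ($i\in I$), and the set $U$ of the remaining singleton components. Each vertex $u\in U$ is isolated in $G-X$, so all its neighbours lie in $X$ and $d(u)\le x$; hence $U$ is independent, sends no edges to the $C_i$, and $e(X,U)\le x|U|\le xn$. From (\ref{eq: TB inequaulity}) with $|X|=x$ we get $\sum_{i\in I}\lfloor |V(C_i)|/2\rfloor\le s-x$ and $|I|\le s-x$, so $|W|=x+\sum_{i\in I}|V(C_i)|\le x+3(s-x)\le 3s$. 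Bounding $e(G[W])\le\binom{|W|}{2}\le\binom{3s}{2}$ then yields
$$ex(n,\{K_{l,t},M_{s+1}\},x)\le xn+\binom{3s}{2}\le (l-2)n+\binom{3s}{2}\qquad (x\le l-2).$$

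For the lower bound I would build one graph $G'$ with $x(G')=l-1$. Let $F_0$ be a clique on $l-1$ vertices, let $R$ be an odd cycle on $2(s-l+1)+1$ vertices, let $U$ be an independent set, join every vertex of $F_0$ to all other vertices, and keep $R$ as the unique nontrivial component of $G'-F_0$. Then $e(G')=(l-1)(n-l+1)+\binom{l-1}{2}+|R|\ge (l-1)n-(l-1)^2$. Three checks are needed. First, $G'$ is $K_{l,t}$-free: any $l$-set $T\not\subseteq F_0$ contains a vertex of $R\cup U$ whose neighbourhood has size at most $l+1$, which already settles $t\ge l+2$, and the borderline cases $t\in\{l,l+1\}$ are killed by the observation that a cycle vertex in $C(T)$ forces $T$ into $F_0$ together with its $\le 2$ cycle-neighbours (so $l\ge 3$ leaves no room). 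Second, $\nu(G')\le s$, witnessed by $F_0$ via Lemma \ref{thm: TB theorem}. Third, and most delicately, $x(G')=l-1$: I would show via Lemma \ref{thm: TB theorem} that every witness must contain $F_0$ (otherwise $G'-X$ is connected and $|X|+\sum\lfloor|V(C_i)|/2\rfloor\approx (n+|X|)/2\gg s$), and then that, because the odd cycle $R$ is factor-critical, the empty set is its only Tutte--Berge minimiser; hence $F_0$ is the unique maximum witness.

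Combining the two bounds, for $x\le l-2$,
$$ex(n,\{K_{l,t},M_{s+1}\},x)\le (l-2)n+\binom{3s}{2}\le (l-1)n-(l-1)^2\le e(G')\le ex(n,\{K_{l,t},M_{s+1}\},l-1),$$
where the middle inequality is exactly $n\ge \binom{3s}{2}+(l-1)^2$, which is guaranteed by $n\ge 2\binom{3s}{2}$ since $\binom{3s}{2}\ge (l-1)^2$. I expect the main obstacle to be the construction side rather than the counting: pinning down $x(G')=l-1$ requires ruling out every larger witness, and this is precisely where factor-criticality of the odd cycle (no nonempty vertex set beats $\emptyset$ in the Tutte--Berge count) carries the argument, together with the verification that joining $F_0$ fully to the cycle introduces no copy of $K_{l,t}$.
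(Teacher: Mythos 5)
Your proposal is correct, and the upper-bound half is essentially the paper's argument: both of you isolate the witness set $X$ with $|X|=x\le l-2$, observe that the union of $X$ with the nontrivial components of $G-X$ has at most $3s$ vertices, and bound everything else by $xn$, giving $ex(n,\{K_{l,t},M_{s+1}\},x)\le (l-2)n+\binom{3s}{2}$, which $n\ge 2\binom{3s}{2}$ makes smaller than $(l-1)n-(l-1)^2$. The difference is in the lower bound on $ex(n,\{K_{l,t},M_{s+1}\},l-1)$. The paper simply asserts that $K_{l-1,n-(l-1)}\in\mathscr{G}_{l-1}$, but under the paper's own definition of $x(G)$ as the \emph{maximum} size of a set satisfying (\ref{eq: TB inequaulity}), this is questionable: for $K_{l-1,n-(l-1)}$ one may take $X$ to be the small side together with up to $s-l+1$ vertices of the large side (the quotient is then all singletons), so $x(K_{l-1,n-(l-1)})=s$, not $l-1$, and that graph lies in $\mathscr{G}_s$. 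Your construction --- the clique $F_0$ on $l-1$ vertices joined to everything, plus an odd cycle $R$ on $2(s-l+1)+1$ vertices --- is designed exactly to close this loophole: the factor-critical component $R$ forces every Tutte--Berge witness to be precisely $F_0$, so $x(G')=l-1$ genuinely holds, at the cost of the routine verification that $G'$ is $K_{l,t}$-free (your treatment of the borderline cases $t\in\{l,l+1\}$ is terse and is the one place that should be written out in full: if a cycle vertex $v\in T$ has $|C(T)|\ge t\ge l$, then $C(T)\subseteq F_0\cup\{v',v''\}$ contains a cycle vertex $w$, whence $T\subseteq N(w)$ and $T$ must avoid the at least $l-2$ vertices of $F_0\cap C(T)$, leaving too few vertices for $T$ once $l\ge 3$). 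So your route costs more work on the construction side but actually certifies membership in $\mathscr{G}_{l-1}$, which the paper's one-line lower bound does not; since the lemma is only used to show the small-$x$ regime cannot dominate, the paper's overall results are unaffected either way.
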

\begin{proof}
   Obviously $K_{l-1,n-(l-1)}\in \mathscr{G}_{l-1}$. Then we have that $ex(n,\{K_{l,t},M_{s+1}\},l-1)\geq (l-1)(n-l+1)$. Assume $1\le x\leq l-2$ and $G\in \mathscr{G}_x$. Let $X\in \mathscr{X}$ such that $|X|=x(G)=x$. Then $|X|+\sum_{i=1}^m \left\lfloor\frac{|V(C_i)|}{2}\right\rfloor\leq s$. Since $|I|\le s$, we have $\sum_{i\in I} |V(C_i)|\leq 3s$. Note that $e(G[X])\leq {x\choose 2}$ and $\sum_{i\in I}e(C_i)\leq {\sum_{i\in I}|V(C_i)|\choose 2}\leq {3s\choose 2}$.
   Hence
$$e(G)\leq e(G[X])+(n-x)x+\sum_{i\in I}e(C_i)\le {x\choose 2}+(n-x)x+{3s\choose 2}.$$
  So for all $1\le x\leq l-2$ and $n\geq 2{3s\choose 2}$, we have $$ex(n,\{K_{l,t},M_{s+1}\},x)\le (l-1)(n-l+1)\leq ex(n,\{K_{l,t},M_{s+1}\},l-1).$$
\end{proof}
\begin{lemma}\label{lem: calculate ex(n,klt,l-1)}
Let $n\geq 2s+1$, $2\leq l\leq t$ and $ s \geq l+1$. Then we have
$$ex(n,\{K_{l,t},M_{s+1}\},l-1)\leq  (l-1)n+ex(2(s-l+1)+1,K_{l,t})-\frac{l(l-1)}{2}.$$
\end{lemma}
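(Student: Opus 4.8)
The plan is to take an arbitrary $G\in\mathscr{G}_{l-1}$ together with an $X\in\mathscr{X}$ satisfying $|X|=x(G)=l-1$, and to bound $e(G)$ by splitting the edges of $G$ into three groups: those inside $X$, those between $X$ and $V(G)\setminus X$, and those inside the components $C_1,\dots,C_m$ of $G-X$. Since every edge of $G-X$ lies in a single component, this accounts for all edges, so
$$e(G)=e(G[X])+e(X,V(G)\setminus X)+\sum_{i=1}^m e(C_i).$$

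First I would dispose of the two easy terms. As $|X|=l-1$, we have $e(G[X])\le\binom{l-1}{2}$, and since each vertex of $V(G)\setminus X$ has at most $|X|=l-1$ neighbours in $X$, we get $e(X,V(G)\setminus X)\le(l-1)(n-(l-1))$. Adding these two bounds and simplifying gives exactly $(l-1)n-\frac{l(l-1)}{2}$, which is the part of the claimed inequality that is linear in $n$; the remaining task is to prove $\sum_{i=1}^m e(C_i)\le ex(2(s-l+1)+1,K_{l,t})$.

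The heart of the argument is this last estimate, and it is where I expect the main difficulty to lie. Singleton components contribute nothing, so only the components $C_i$ with $i\in I$ (those with $c_i:=|V(C_i)|\ge 2$) matter, and each such $C_i$ is $K_{l,t}$-free, hence $e(C_i)\le ex(c_i,K_{l,t})$. The Tutte-Berge inequality (\ref{eq: TB inequaulity}) for $X$ reads $(l-1)+\sum_i\lfloor c_i/2\rfloor\le s$, so $\sum_{i\in I}\lfloor c_i/2\rfloor\le s-l+1$. Using $c_i\le 2\lfloor c_i/2\rfloor+1$ for each $i\in I$ yields $\sum_{i\in I}c_i\le 2(s-l+1)+|I|$, and therefore
$$\sum_{i\in I}c_i-|I|+1\le 2(s-l+1)+1.$$
Now I would apply the subadditivity Lemma \ref{con-lem-1} to the numbers $\{c_i\}_{i\in I}$, together with the monotonicity of $ex(\cdot,K_{l,t})$, to obtain
$$\sum_{i\in I}e(C_i)\le\sum_{i\in I}ex(c_i,K_{l,t})\le ex\Big(\sum_{i\in I}c_i-|I|+1,K_{l,t}\Big)\le ex(2(s-l+1)+1,K_{l,t}).$$
The point that makes this work is that the term $-|I|+1$ produced by the subadditivity lemma exactly absorbs the $+|I|$ coming from rounding up the component sizes, so the bound does not deteriorate as the number of components grows. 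The degenerate cases $I=\emptyset$ and $|I|=1$, where Lemma \ref{con-lem-1} does not directly apply (it needs at least two terms), are handled trivially, since $ex\ge 0$ and, in the second case, monotonicity alone gives $e(C_i)\le ex(c_i,K_{l,t})\le ex(2(s-l+1)+1,K_{l,t})$. Combining the three bounds completes the proof.
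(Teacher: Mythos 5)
Your proposal is correct and follows essentially the same route as the paper's proof: the same three-way edge decomposition, the same bounds $e(G[X])\le\binom{l-1}{2}$ and $e(X,V(G)\setminus X)\le(l-1)(n-l+1)$, and the same combination of the Tutte--Berge inequality with the subadditivity Lemma \ref{con-lem-1} to bound $\sum_i e(C_i)$ by $ex(2(s-l+1)+1,K_{l,t})$. Your explicit treatment of the degenerate cases $|I|\le 1$ and of the monotonicity step is a small tidiness improvement over the paper's write-up, but the argument is the same.
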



\begin{proof}
   Let  $G \in\mathscr{G}_{l-1}$ such that $e(G)=ex(n,\{K_{l,t},M_{s+1}\},l-1)$. Let $X\in \mathscr{X}$ such that $|X|=l-1$.
 Since $G$ is $K_{l,t}$-free, we have $e(G[C_i])\leq ex(|V(C_i)|,K_{l,t})$. Note that $\sum_{i=1}^m e(V(C_i),X)\leq (l-1)(n-l+1).$ So
 \begin{equation*}
     \begin{aligned}
         e(G)\leq &e(G[X])+\sum_{i=1}^m  e(G[C_i])+\sum_{i=1}^m e(V(C_i),X)\\
         \leq&\frac{(l-1)(l-2)}{2}+\sum_{i=1}^m  ex(|V(C_i)|,K_{l,t})+(l-1)(n-l+1)\\
         =& (l-1)n+\sum_{i=1}^m  ex(|V(C_i)|,K_{l,t})-\frac{l(l-1)}{2}.
     \end{aligned}
 \end{equation*}
 By Lemma \ref{con-lem-1}, we have $\sum_{i=1}^m  ex(|V(C_i)|,K_{l,t})\le ex(\sum_{i=1}^m|V(C_i)|-m+1,K_{l,t})$.
We left to prove $\sum_{i=1}^m|V(C_i)|\leq 2(s-l+1)+m$.
We may assume $|V(C_i)|$ are all odd, i.e. $|V(C_i)|=2k_i+1$ ($1\leq i\leq m$). By Lemma \ref{thm: TB theorem}, we have $\sum_{i=1}^mk_i \le s-l+1$ and then $\sum_{i=1}^m|V(C_i)| \le 2(s-l+1)+m$. Hence we have $\sum_{i=1}^m  ex(|V(C_i)|,K_{l,t})\leq ex(2(s-l+1)+1,K_{l,t})$ and we are done.
\end{proof}

 Combining Lemmas \ref{lem: ex(n,klt,k)< ex(n,klt,l-1)} and \ref{lem: calculate ex(n,klt,l-1)}, we have the following result.

\begin{cor}\label{new2} Let $n\geq 2{3s\choose 2}$ and $s\ge l+1\geq 3$. For all $1\le x\leq l-1$, we have that
$$
    ex(n,\{K_{l,t},M_{s+1}\},x)\leq  (l-1)n+ex(2(s-l+1)+1,K_{l,t})-\frac{l(l-1)}{2}.
$$
\end{cor}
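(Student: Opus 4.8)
The plan is to obtain the bound by composing the two preceding lemmas along the parameter $x$. Fix any $x$ with $1\le x\le l-1$. The first step is to invoke Lemma \ref{lem: ex(n,klt,k)< ex(n,klt,l-1)}, whose hypotheses $n\ge 2{3s\choose 2}$ and $s\ge l+1\ge 3$ are exactly those of the corollary, to conclude that $ex(n,\{K_{l,t},M_{s+1}\},x)\le ex(n,\{K_{l,t},M_{s+1}\},l-1)$. In other words, once $n$ is large, the value $x=l-1$ dominates every smaller admissible value of the parameter, so it suffices to bound the right-hand side.

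The second step is to apply Lemma \ref{lem: calculate ex(n,klt,l-1)} to that right-hand side, which gives $ex(n,\{K_{l,t},M_{s+1}\},l-1)\le (l-1)n+ex(2(s-l+1)+1,K_{l,t})-\frac{l(l-1)}{2}$. Chaining the two inequalities yields the claim for every $x$ in the range $1\le x\le l-1$.

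The only point needing a line of checking is that the corollary's hypotheses imply those of Lemma \ref{lem: calculate ex(n,klt,l-1)}, namely $n\ge 2s+1$, $2\le l\le t$, and $s\ge l+1$. Here $s\ge l+1\ge 3$ supplies both $s\ge l+1$ and $l\ge 2$, the inequality $l\le t$ holds throughout this section, and since $2{3s\choose 2}=9s^2-3s\ge 2s+1$ for every $s\ge 1$, the assumption $n\ge 2{3s\choose 2}$ is stronger than the required $n\ge 2s+1$. Hence both lemmas are applicable under the stated hypotheses.

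There is essentially no obstacle: the corollary is a purely formal consequence of the two lemmas, with all of the real content residing in them (the edge-counting estimate in Lemma \ref{lem: ex(n,klt,k)< ex(n,klt,l-1)} forcing small $x$ to lose to $x=l-1$ once $n$ is large, and the Tutte--Berge based component analysis in Lemma \ref{lem: calculate ex(n,klt,l-1)}). Its role is simply to package a single upper bound, valid uniformly over all $1\le x\le l-1$, in the form needed later when $ex(n,\{K_{l,t},M_{s+1}\})$ is assembled as $\max_{1\le x\le s}ex(n,\{K_{l,t},M_{s+1}\},x)$.
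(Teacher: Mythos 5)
Your proof is correct and matches the paper exactly: the paper introduces this corollary with the single line ``Combining Lemmas \ref{lem: ex(n,klt,k)< ex(n,klt,l-1)} and \ref{lem: calculate ex(n,klt,l-1)}, we have the following result,'' which is precisely your chain of the two inequalities. Your extra check that $n\ge 2\binom{3s}{2}$ implies $n\ge 2s+1$ and that the remaining hypotheses of Lemma \ref{lem: calculate ex(n,klt,l-1)} are satisfied is a harmless (and valid) elaboration of what the paper leaves implicit.
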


 For $l\le x\leq s$, we have the following results for the upper bound.
\begin{lemma}\label{lem: ex(n,klt >l) leq xxx} Let $n\geq 2s+1$, $t\ge l$ and $2\le l\le x\le s$. Then we have $$
    ex(n,\{K_{l,t},M_{s+1}\},x)\leq (t-1){x\choose l}+(l-1)n-\left\lceil\frac{x(l-1)}{2}\right\rceil+ex(2(s-x)+1,K_{l,t}).
$$
\end{lemma}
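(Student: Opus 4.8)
The plan is to fix a graph $G\in\mathscr{G}_x$ achieving $e(G)=ex(n,\{K_{l,t},M_{s+1}\},x)$, choose $X\in\mathscr{X}$ with $|X|=x(G)=x$, let $C_1,\ldots,C_m$ be the components of $G-X$, and split
$$e(G)=e(G[X])+e(X,V(G)\setminus X)+\sum_{i=1}^m e(G[C_i]).$$
I would bound the three groups separately: the last will contribute $ex(2(s-x)+1,K_{l,t})$, and the first two together will contribute $(t-1)\binom{x}{l}+(l-1)n-\lceil\tfrac{x(l-1)}{2}\rceil$.

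For the component term, each $C_i$ is $K_{l,t}$-free, so $e(G[C_i])\le ex(|V(C_i)|,K_{l,t})$, and Lemma \ref{con-lem-1} gives $\sum_i e(G[C_i])\le ex\big(\sum_i|V(C_i)|-m+1,K_{l,t}\big)$. The Tutte--Berge bound (Lemma \ref{thm: TB theorem}) applied to $X$ reads $\sum_i\lfloor|V(C_i)|/2\rfloor\le s-x$, so $\sum_i|V(C_i)|\le 2(s-x)+m$, whence $\sum_i|V(C_i)|-m+1\le 2(s-x)+1$; monotonicity of $ex(\cdot,K_{l,t})$ in the number of vertices then closes this case.

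The heart is the count of edges meeting $X$. For $v\in V(G)$ write $d_X(v)=|N(v)\cap X|$. Every $l$-subset $T$ of $X$ is an $l$-horn with at most $t-1$ centers, so summing $|C(T)|\le t-1$ over all $\binom{x}{l}$ such $T$ and exchanging the order of summation yields the key inequality $\sum_{v\in V(G)}\binom{d_X(v)}{l}\le (t-1)\binom{x}{l}$. I would then invoke the elementary bound $\binom{d}{l}\ge d-(l-1)$, valid for every integer $d\ge 0$ (trivial for $d<l$, immediate by induction for $d\ge l$), rewritten as $d_X(v)\le (l-1)+\binom{d_X(v)}{l}$. Applying it to $v\notin X$ gives $e(X,V(G)\setminus X)=\sum_{v\notin X}d_X(v)\le (l-1)(n-x)+\sum_{v\notin X}\binom{d_X(v)}{l}$, while applying it to $v\in X$ together with $\sum_{v\in X}d_X(v)=2e(G[X])$ gives $e(G[X])\le\tfrac12\big[(l-1)x+\sum_{v\in X}\binom{d_X(v)}{l}\big]$. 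Adding these two and bounding $\sum_{v\notin X}\binom{d_X(v)}{l}+\tfrac12\sum_{v\in X}\binom{d_X(v)}{l}\le\sum_{v\in V(G)}\binom{d_X(v)}{l}\le(t-1)\binom{x}{l}$ produces
$$e(G[X])+e(X,V(G)\setminus X)\le (l-1)n-\tfrac{(l-1)x}{2}+(t-1)\binom{x}{l}.$$

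The final, slightly delicate, step is to sharpen $\tfrac{(l-1)x}{2}$ into $\lceil\tfrac{(l-1)x}{2}\rceil$. Since the left-hand side is an integer and the right-hand side equals the integer $(l-1)n+(t-1)\binom{x}{l}$ minus $\tfrac{(l-1)x}{2}$, when $(l-1)x$ is odd the right-hand side is a half-integer, which forces the left-hand side to be at most a further $\tfrac12$ smaller; this is precisely the gap between $\tfrac{(l-1)x}{2}$ and $\lceil\tfrac{(l-1)x}{2}\rceil$, and when $(l-1)x$ is even the two agree. Combining with the component bound gives the stated inequality. I expect the main obstacle to be exactly this interaction between the weight $\tfrac12$ charged to the within-$X$ edges and the parity argument: the horn double-count must be separated cleanly into the $v\in X$ and $v\notin X$ parts so that each edge of $G[X]$ is charged with weight $\tfrac12$ rather than $1$, since otherwise one only recovers $\tfrac{(l-1)x}{2}$ and not the ceiling.
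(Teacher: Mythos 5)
Your proof is correct and follows essentially the same route as the paper's: the same horn double count $\sum_{v}\binom{d_X(v)}{l}\le(t-1)\binom{x}{l}$, the same elementary bound $\binom{d}{l}\ge d-(l-1)$, and the same reduction of the component term to $ex(2(s-x)+1,K_{l,t})$ via Lemma \ref{con-lem-1} and Lemma \ref{thm: TB theorem}. The only (harmless) difference is where integrality is invoked to gain the ceiling: the paper first proves $d_X(v)\ge l-1$ for all $v$ by an edge-addition/maximality argument and applies integrality to $e(G[X])=\frac12\sum_{v\in X}d_X(v)$, whereas you apply it once at the end to the integer $e(G[X])+e(X,V(G)\setminus X)$, which lets you skip the minimum-degree step entirely.
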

\begin{proof} Choose $G \in\mathscr{G}_{x}$ such that $e(G)=ex(n,\{K_{l,t},M_{s+1}\},x)$. Let $X\in \mathscr{X}$ such that $|X|=x$. Suppose there is $v\in V(G)$ such that $d_X(v)\le l-2$. Since $|X|=x\ge l$, there is $u\in X\setminus N_X(v)$ such that $uv\notin E(G)$. Let $G'=G+uv$. By Lemma \ref{thm: TB theorem} and $d_X(v)\le l-2$, we have $G'\in \mathscr{G}_{k}$ with $e(G')>e(G)$, a contradiction with the choice of $G$. Hence
 $d_X(v)\ge l-1$ for any $v\in V(G)$.

Let $T$ be a  $l$-horn of $X$. Since $G$ is $K_{l,t}$-free,  $|C(T)|\le t-1$. Thus by calculating the number of $l$-horns of $X$, we have
 \begin{equation*}\label{eq: calculate horns}
     \begin{aligned}
         (t-1){x\choose l}&\geq \sum_{v\in V(G)}{d_X(v)\choose l}=\sum_{v\in X}{d_X(v)\choose l}+\sum_{v\notin X}{d_X(v)\choose l}\\
         &\geq \sum_{v\in X}(d_X(v)-(l-1))+\sum_{v\notin X}(d_X(v)-(l-1))\\
     &\geq \frac{1}{2}\sum_{v\in X}d_X(v)-\left\lfloor\frac{x(l-1)}{2}\right\rfloor+\sum_{v\notin X}(d_X(v)-(l-1))\\
                 &= \frac{1}{2}\sum_{v\in X}d_X(v)+\sum_{v\notin X}d_X(v)-(l-1)n+\lc\frac{(l-1)x}{2}\rc.
    \end{aligned}
\end{equation*}
 Since $e(G)=\frac{1}{2}\sum_{v\in X}d_X(v)+\sum_{v\notin X}d_X(v)+\sum_{i=1}^{m}e(C_i)$ and $e(C_i)\leq ex(|V(C_i)|,K_{l,t})$, we have
$$e(G)\leq (t-1){x\choose l}+(l-1)n-\left\lceil\frac{(l-1)x}{2}\right\rceil+\sum_{i=1}^m ex(|V(C_i)|,K_{l,t}),$$
where
\begin{equation}\label{eq: consriction of TB thm}
    \sum_{i=1}^m\left\lfloor\frac{|V(C_i)|}{2}\right\rfloor+k\leq s.
\end{equation}
Similar to the proof of Lemma \ref{lem: calculate ex(n,klt,l-1)}, we have that $\sum_{i=1}^m ex(|V(C_i)|,K_{l,t})\leq ex(2(s-x)+1,K_{l,t})$. Then
$$
    e(G)\leq (t-1){x\choose l}+(l-1)n-\left\lceil\frac{x(l-1)}{2}\right\rceil+ex(2(s-x)+1,K_{l,t})
$$and we are done.
\end{proof}

Note that $ex(2(s-x)+1,K_{l,t})=O((s-x)^{2-1/t}).$ If  $l\ge 3$, then the following function
$$(t-1){x\choose l}+(l-1)n-\left\lceil\frac{x(l-1)}{2}\right\rceil+ex(2(s-x)+1,K_{l,t})$$
 is monotonically increasing when $s$ is large  enough. Thus by Lemma \ref{lem: ex(n,klt >l) leq xxx}, we easily have the following corollary.
\begin{cor}\label{new}
Let $n\geq 2s+1$, $t\ge l$ and $2\le l\le x\le s$. If $s$ is large  enough, then we have $$ex(n,\{K_{l,t},M_{s+1}\},x)\leq (l-1)n+(t-1){s\choose l}-\lc\frac{s(l-1)}{2}\rc.$$
\end{cor}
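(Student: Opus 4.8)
The plan is to feed the bound of Lemma \ref{lem: ex(n,klt >l) leq xxx} into a single discrete optimisation and show that its maximiser over $l\le x\le s$ is the right endpoint $x=s$. Write
$$\phi(x) = (l-1)n + (t-1){x\choose l} - \lc\frac{x(l-1)}{2}\rc + ex(2(s-x)+1,K_{l,t}),$$
so that Lemma \ref{lem: ex(n,klt >l) leq xxx} reads $ex(n,\{K_{l,t},M_{s+1}\},x)\le \phi(x)$ for every $l\le x\le s$. Since a single vertex is trivially $K_{l,t}$-free, $ex(1,K_{l,t})=0$, and hence $\phi(s)=(l-1)n+(t-1){s\choose l}-\lc\frac{s(l-1)}{2}\rc$, which is exactly the claimed right-hand side. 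Thus it suffices to prove $\phi(x)\le\phi(s)$ for all $l\le x\le s$ once $s$ is large enough; note that the term $(l-1)n$ is common to all values and cancels, so the threshold on $s$ will depend only on $l$ and $t$, not on $n$.

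Rather than establishing that $\phi$ increases step by step, I would compare each $x$ directly against the endpoint $x=s$. For $l\le x\le s-1$ write
$$\phi(s)-\phi(x) = (t-1)\Big({s\choose l}-{x\choose l}\Big) - \Big(\lc\tfrac{s(l-1)}{2}\rc - \lc\tfrac{x(l-1)}{2}\rc\Big) - ex(2(s-x)+1,K_{l,t}).$$
The key is a clean lower bound on the binomial gap: since ${x\choose l}$ is non-decreasing and $x\le s-1$, Pascal's identity gives ${s\choose l}-{x\choose l}\ge {s\choose l}-{s-1\choose l}={s-1\choose l-1}$, which is of order $s^{l-1}$ and, crucially, holds uniformly in $x$. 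The two loss terms I would bound crudely: the ceiling difference is at most $\frac{s(l-1)}{2}+1=O(s)$, and the K\H{o}v\'ari--S\'os--Tur\'an estimate $ex(N,K_{l,t})=O(N^{2-1/t})$ gives $ex(2(s-x)+1,K_{l,t})=O(s^{2-1/t})$.

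Combining these yields $\phi(s)-\phi(x)\ge (t-1){s-1\choose l-1}-O(s^{2-1/t})-O(s)$. Because $l\ge 3$ forces $l-1\ge 2>2-\tfrac1t$, the leading term of order $s^{l-1}$ dominates both loss contributions, so $\phi(s)-\phi(x)>0$ for every $l\le x\le s-1$ as soon as $s$ exceeds a threshold depending only on $l$ and $t$. Hence $\max_{l\le x\le s}\phi(x)=\phi(s)$ and the corollary follows.

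The main obstacle is that a naive neighbour-to-neighbour monotonicity argument actually breaks down near $x=l$: there the gain from the binomial term is only $(t-1){x\choose l-1}$, a quantity that stays bounded while the $ex$-term can drop by order $(s-x)^{1-1/t}$, so $\phi$ need not be increasing there. The fix is exactly the direct comparison above, which exploits that the binomial gap against the fixed endpoint $x=s$ stays of order $s^{l-1}$ for \emph{all} $x$ (its minimum, at $x=s-1$, is still ${s-1\choose l-1}$), comfortably outgrowing the $O(s^{2-1/t})$ contribution of the $K_{l,t}$-free remainder. This is also where the hypothesis $l\ge 3$ is indispensable: for $l=2$ the gap is only of order $s$ while the loss is of order $s^{2-1/t}>s$, so the endpoint need not win, consistent with the separate treatment of $K_{2,t}$.
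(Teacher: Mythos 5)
Your proof is correct for $l\ge 3$ and takes a genuinely different route from the paper. The paper simply asserts that the bounding function from Lemma \ref{lem: ex(n,klt >l) leq xxx} is monotonically increasing in $x$ once $s$ is large; as you observe, that assertion is actually problematic near $x=l$, where the one-step gain $(t-1)\binom{x}{l-1}$ is bounded in terms of $l,t$ while the drop in $ex(2(s-x)+1,K_{l,t})$ as $x$ increases can grow with $s$, so stepwise monotonicity can fail for arbitrarily large $s$. Your direct comparison of each $\phi(x)$ against the endpoint $\phi(s)$, using the uniform lower bound $\binom{s}{l}-\binom{x}{l}\ge\binom{s-1}{l-1}=\Theta_l(s^{l-1})$ against losses of order $O(s^{2-1/t})+O(s)$, sidesteps this entirely and yields a genuinely complete argument (with a threshold on $s$ depending only on $l$ and $t$, as required). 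What the paper's phrasing buys is brevity; what yours buys is an actual proof. Two small remarks. First, the corollary as stated formally allows $l=2$, which neither your argument nor the paper's covers; this appears to be an artifact of copying the hypotheses of Lemma \ref{lem: ex(n,klt >l) leq xxx}, since the corollary is only invoked for $l\ge 3$ and the $K_{2,t}$ case is treated separately. Second, your closing claim that for $l=2$ ``the endpoint need not win'' is too pessimistic: writing $y=s-x$, the gain $(t-1)\bigl(\binom{s}{2}-\binom{x}{2}\bigr)\ge (t-1)ys/2$ still dominates the loss $O(y^{3/2}\sqrt{t})+O(y)=O(y\sqrt{st})$ for $s$ large, so the endpoint does win for $l=2$ as well; it is only your $x$-uniform bounds that are too crude there, not the conclusion.
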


Now we are going to prove Theorem \ref{thm: ex for k_l,t}.
\vskip.2cm

\begin{reproof} The lower bound in  Theorem \ref{thm: ex for k_l,t} is given by the Constructions \ref{con: k_l,t-free 1} and \ref{con: k_l,t-free 2}. So we just show the upper bound.


Since $ex(n,\{K_{l,t},M_{s+1}\})=\max_{1\leq x\leq s}ex(n,\{K_{l,t},M_{s+1}\},x)$, by Corollaries \ref{new2} and \ref{new}, we have $$\begin{array}{rcl}
& &ex(n,\{K_{l,t},M_{s+1}\})\\
&\leq &\max\left\{(l-1)n+ex(2(s-l+1)+1,K_{l,t})-\frac{l(l-1)}{2},(l-1)n+(t-1){s\choose l}-\lc\frac{s(l-1)}{2}\rc\right\}.\end{array}$$Note that $ex(2(s-l+1)+1,K_{l,t})=O((s-x)^{2-1/t})$.
When $l\geq 2$ and $s$ is large enough, $(l-1)n+(t-1){s\choose l}-\lc\frac{s(l-1)}{2}\rc>(l-1)n+ex(2(s-l+1)+1,K_{l,t})-\frac{l(l-1)}{2}$. So
$$ex(n,\{K_{l,t},M_{s+1}\})\leq (l-1)n+(t-1){s\choose l}-\lc\frac{s(l-1)}{2}\rc.$$
\end{reproof}

If we replace the condition that $s$ is large enough  with $s\ge l+1$, we obtain a weaker conclusion.
\begin{cor}
    Let $3\leq l\leq t$, $s\geq l+1$ and $n\geq 2{3s\choose 2}$. Then we have $$\exkm\leq (t-1){s\choose l}+(l-1)n-\frac{l(l-1)}{2}+ex(2(s-l)+1,K_{l,t}).$$
\end{cor}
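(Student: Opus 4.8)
The plan is to recycle the two upper bounds already proved---Corollary~\ref{new2}, valid for $1\le x\le l-1$, and Lemma~\ref{lem: ex(n,klt >l) leq xxx}, valid for $l\le x\le s$---but \emph{without} the hypothesis ``$s$ is large enough''. Since $\exkm=\max_{1\le x\le s}ex(n,\{K_{l,t},M_{s+1}\},x)$ and the hypotheses $n\ge 2\binom{3s}{2}$, $s\ge l+1$ are exactly what those two results require, it suffices to show that both bounds are dominated by
$$B:=(t-1)\binom{s}{l}+(l-1)n-\frac{l(l-1)}{2}+ex(2(s-l)+1,K_{l,t}).$$

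First I would dispose of the range $l\le x\le s$, where no largeness is needed. The bound of Lemma~\ref{lem: ex(n,klt >l) leq xxx} is $(l-1)n+g(x)$ with $g(x)=(t-1)\binom{x}{l}-\lceil x(l-1)/2\rceil+ex(2(s-x)+1,K_{l,t})$, and each $x$-dependent summand is extremised at an endpoint of $[l,s]$: the quantity $\binom{x}{l}$ is nondecreasing in $x$ so $\binom{x}{l}\le\binom{s}{l}$; the ceiling satisfies $\lceil x(l-1)/2\rceil\ge\lceil l(l-1)/2\rceil=\tfrac{l(l-1)}2$ since $l(l-1)$ is even; and, by monotonicity of the Tur\'an number in the number of vertices, $x\ge l$ gives $ex(2(s-x)+1,K_{l,t})\le ex(2(s-l)+1,K_{l,t})$. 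Adding these shows $g(x)\le(t-1)\binom{s}{l}-\tfrac{l(l-1)}2+ex(2(s-l)+1,K_{l,t})$, hence the Lemma~\ref{lem: ex(n,klt >l) leq xxx} bound is at most $B$ for every such $x$.

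The range $1\le x\le l-1$ is the real work. Corollary~\ref{new2} bounds these terms by $(l-1)n+ex(2(s-l+1)+1,K_{l,t})-\tfrac{l(l-1)}2$, and because $2(s-l+1)+1=2(s-l)+3$ the inequality ``this $\le B$'' is equivalent to the increment estimate
$$ex\big(2(s-l)+3,K_{l,t}\big)\le ex\big(2(s-l)+1,K_{l,t}\big)+(t-1)\binom{s}{l}.$$
I would establish this by a two-vertex deletion: in an extremal $K_{l,t}$-free graph $H$ on $N:=2(s-l)+3$ vertices, deleting any two vertices leaves a $K_{l,t}$-free graph on $N-2=2(s-l)+1$ vertices, and the number of edges meeting a fixed pair of vertices is at most $2N-3$ (the maximum of $d(u)+d(v)-\mathbf{1}[uv\in E]$). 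Thus $ex(N,K_{l,t})\le ex(N-2,K_{l,t})+2N-3$, so the increment is at most $2N-3=4(s-l)+3$, and it remains only to verify the arithmetic inequality $4(s-l)+3\le(t-1)\binom{s}{l}$.

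This last numerical check is where the condition $l\ge 3$ is used and is the one genuinely delicate step. Setting $d:=s-l\ge1$ and using $t-1\ge l-1\ge2$ together with $\binom{s}{l}=\binom{l+d}{d}\ge\binom{d+3}{3}$, the right-hand side is at least $\tfrac{(d+1)(d+2)(d+3)}{3}$, which exceeds $4d+3$ for every $d\ge1$ (the two sides are closest at $d=1$, where they equal $8$ and $7$). Combining the three displayed bounds shows that $ex(n,\{K_{l,t},M_{s+1}\},x)\le B$ for every $1\le x\le s$, whence $\exkm\le B$, which is the claim. The main obstacle is precisely the increment estimate together with its arithmetic verification: the two term-by-term bounds for $l\le x\le s$ are immediate, whereas for $x\le l-1$ one must quantify how much the Tur\'an number of $K_{l,t}$ can grow when two vertices are added and confirm that $(t-1)\binom{s}{l}$ absorbs this growth uniformly over the admissible range of $(l,t,s)$.
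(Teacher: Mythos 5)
Your proposal is correct and follows essentially the same route as the paper: reduce via $\exkm=\max_x ex(n,\{K_{l,t},M_{s+1}\},x)$ to the two ranges $1\le x\le l-1$ and $l\le x\le s$, and for the first range absorb the increment $ex(2(s-l)+3,K_{l,t})-ex(2(s-l)+1,K_{l,t})\le 4(s-l)+3$ into $(t-1)\binom{s}{l}\ge 2\binom{(s-l)+3}{3}$, exactly as the paper does with $a=s-l$. Your write-up is in fact slightly more careful in two spots where the paper is terse: you actually prove the increment bound (via the two-vertex deletion count $2N-3$) rather than merely asserting it, and you handle $l\le x\le s$ by endpoint monotonicity of each term in Lemma~\ref{lem: ex(n,klt >l) leq xxx} instead of citing Corollary~\ref{new}, which formally carries an ``$s$ large enough'' hypothesis not present here.
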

\begin{proof}
  Since $ex(n,\{K_{l,t},M_{s+1}\})=\max_{1\leq x\leq s}ex(n,\{K_{l,t},M_{s+1}\},x)$, by Corollaries \ref{new2} and \ref{new}, we just need  to prove that when $1\le x\leq l-1$, $ex(n,\{K_{l,t},M_{s+1}\},x)\leq (t-1){s\choose l}+(l-1)n-\frac{l(l-1)}{2}+ex(2(s-l)+1,K_{l,t}).$
Equivalent to proof $$(l-1)n+ex(2(s-l+1)+1,K_{l,t})-\frac{l(l-1)}{2}\leq (t-1){s\choose l}+(l-1)n-\frac{l(l-1)}{2}+ex(2(s-l)+1,K_{l,t}),$$
    i.e. to proof
    $$ex(2(s-l+1)+1,K_{l,t})\leq (t-1){s\choose l}+ ex(2(s-l)+1,K_{l,t}).$$
    Set $a=s-l$. Then $a\geq 1$. Since $t\geq l\geq 3$ and $ex(2a+3,K_{l,t})-ex(2a+1,K_{l,t})\leq 4a+3$, we have
    \begin{equation*}
        \begin{aligned}
             (t-1){s\choose l}\geq& 2{l+a\choose l}\geq 2{a+3\choose 3}=\frac{(a+3)(a+2)(a+1)}{3}\\
             \geq& 4a+3\geq ex(2a+3,K_{l,t})-ex(2a+1,K_{l,t})
        \end{aligned}
    \end{equation*}
    and we are done.
\end{proof}
 When $t\geq 2(s-l)+2$, we can determine the exact value of $\exkm$ if $n\geq 2{3s\choose 2}.$ Recall
 $$F_1(x)=(l-1)n+(t-1){x\choose l}+{2(s-x)+1\choose 2}-\lc\frac{x(l-1)}{2}\rc.$$
\begin{theorem}
    Let $2l+2\leq 2s\leq 2l+t-2$, $l\geq 3$, $t\ge 4$ and $n\geq \max\{2{3s\choose 2},n(s)\}$.
    Then we have
    $$\exkm= \max\{F_1(l),F_1(s)\}.$$
\end{theorem}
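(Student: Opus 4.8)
The plan is to establish matching bounds $\exkm=\max\{F_1(l),F_1(s)\}$, exploiting that the hypothesis $2s\le 2l+t-2$ puts us in the regime where $t$ is large compared with $s-l$. First I would record that for every $x$ with $l\le x\le s$ one has $2(s-x)+1\le 2(s-l)+1\le t-1\le l+t-1$, so the complete graph on $2(s-x)+1$ vertices is $K_{l,t}$-free and $ex(2(s-x)+1,K_{l,t})=\binom{2(s-x)+1}{2}$; likewise $2(s-l+1)+1\le t+1\le l+t-1$. Consequently the bound of Lemma~\ref{lem: ex(n,klt >l) leq xxx} is exactly $F_1(x)$ on $l\le x\le s$, and Corollary~\ref{new2} reads $ex(n,\{K_{l,t},M_{s+1}\},x)\le M:=(l-1)n+\binom{2(s-l+1)+1}{2}-\frac{l(l-1)}{2}$ on $1\le x\le l-1$. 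For the lower bound I would invoke the lower-bound part of Theorem~\ref{thm: ex for k_l,t}: since $2s-t+1\le 2l-1$ we get $\max\{2s-t+1,2l\}=2l$, and since $t>2(s-l)$ the index range of $r_2$ is empty, whence $\max\{r_1,r_2\}=r_1=\max_{l\le x\le s}F_1(x)$. (Here $n\ge n(s)=n_1$, because $\binom{s}{l}\ge s$ forces $n(s)\ge n(0)$, so Theorem~\ref{thm: ex for k_l,t} and the constructions apply.)

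Next I would prove that $F_1$ is discretely convex on $\{l,l+1,\dots,s\}$. Computing the second difference $F_1(x+1)-2F_1(x)+F_1(x-1)$ termwise, the part $(t-1)\binom{x}{l}$ contributes $(t-1)\binom{x-1}{l-2}$, the part $\binom{2(s-x)+1}{2}$ contributes $4$, and the part $-\lceil\frac{x(l-1)}{2}\rceil$ contributes $0$ if $l$ is odd and a value in $\{-1,+1\}$ if $l$ is even. For an interior $x$ we have $\binom{x-1}{l-2}\ge\binom{l}{2}\ge 3$, so with $t\ge 4$ the second difference is at least $3\cdot 3+4-1>0$; hence $F_1$ lies below its chord, so $\max_{l\le x\le s}F_1(x)=\max\{F_1(l),F_1(s)\}$. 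This simultaneously collapses the lower bound to $\max\{F_1(l),F_1(s)\}$ and, via Lemma~\ref{lem: ex(n,klt >l) leq xxx}, gives $ex(n,\{K_{l,t},M_{s+1}\},x)\le F_1(x)\le\max\{F_1(l),F_1(s)\}$ for all $l\le x\le s$.

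It then remains to bound the range $1\le x\le l-1$, where Corollary~\ref{new2} only yields $M$, so I must check $M\le\max\{F_1(l),F_1(s)\}$. I would prove the stronger $M\le F_1(s)$ directly. Writing $a=s-l\ge 1$ and using $\binom{1}{2}=0$, one has $F_1(s)-M=(t-1)\binom{s}{l}-\binom{2a+3}{2}-\lceil\frac{s(l-1)}{2}\rceil+\frac{l(l-1)}{2}$. Bounding $t-1\ge 2a+1$ and $\lceil\frac{s(l-1)}{2}\rceil\le\frac{s(l-1)}{2}+\frac12$ reduces this to showing $(2a+1)\binom{l+a}{l}\ge\binom{2a+3}{2}+\frac{a(l-1)}{2}+\frac12$. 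For $a=1$ this is $3(l+1)\ge 10+\frac{l}{2}$, true for $l\ge 3$; for $a\ge 2$ one uses $\binom{l+a}{l}\ge\binom{l+a}{2}$, so the left side is quadratic in $l$ while the right side is linear, and the inequality is checked at $l=3$. Hence $M\le F_1(s)\le\max\{F_1(l),F_1(s)\}$. Since $\exkm=\max_{1\le x\le s}ex(n,\{K_{l,t},M_{s+1}\},x)$, the upper bound $\exkm\le\max\{F_1(l),F_1(s)\}$ follows and matches the lower bound.

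The main obstacle I expect is the ceiling-induced parity bookkeeping. Its contribution to the second difference changes sign with the parity of $x$ when $l$ is even, so strict convexity must survive the worst case; and in $M\le F_1(s)$ the same ceiling produces half-integer slack. Because $F_1$ is integer-valued this slack is harmless, but one must confirm it at the tight boundary instances (for example $l=3$, $a=1$, $t=4$, where $F_1(l-1)=F_1(s)$ and $M=F_1(s)-1$), where the two endpoint values nearly coincide. The final quadratic-versus-linear comparison is elementary but uses the hypothesis $t\ge 2(s-l)+2$ in an essential way, and it should be verified at the smallest admissible values of $a$ and $l$.
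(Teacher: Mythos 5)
Your proposal is correct and follows essentially the same route as the paper: the lower bound from the constructions of Theorem \ref{thm: ex for k_l,t} (with the $r_2$ range empty), the upper bound $F_1(x)$ for $l\le x\le s$ from Lemma \ref{lem: ex(n,klt >l) leq xxx} combined with discrete convexity of $F_1$, and Corollary \ref{new2} for $1\le x\le l-1$. The only (cosmetic) difference is that you compare the small-$x$ bound $M$ directly with $F_1(s)$, whereas the paper chains it through $F_1(l+1)$; both verifications are routine and your convexity computation is actually more explicit than the paper's ``easy to check.''
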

\begin{proof}
Since $2s\leq 2l+t-2$, we have $2s-t+1\leq 2l$. For $t\geq l\geq 3$, we have $n_1 = n(s)\leq n$. By Theorem \ref{thm: ex for k_l,t}, we have
\begin{equation}\label{low}
\exkm\geq \max_{l\leq x\leq s }\{F_1(x))\}.\end{equation}
By Lemma \ref{lem: ex(n,klt >l) leq xxx}, for any $l\leq x\leq s$,
$$ex(n,\{K_{l,t},M_{s+1}\},x)\leq (l-1)n+(t-1){x\choose l}+ex(2(s-x)+1,K_{l,t})-\lc\frac{x(l-1)}{2}\rc=F_1(x).$$
By Corollary \ref{new2}, for any $1\le x\le l-1$,
 $$ex(n,\{K_{l,t},M_{s+1}\},x)\leq (l-1)n+ex(2(s-l+1)+1,K_{l,t})-\frac{l(l-1)}{2}.$$ Note that $ex(2(s-l+1)+1,K_{l,t})\leq ex(2(s-l-1)+1,K_{l,t})+8(s-l)+2.$ Since $2(s-l-1)+1<t$, $ex(2(s-l-1)+1,K_{l,t})={2(s-l-1)+1\choose 2}$. Then for any $1\le x\le l-1$, we have
$$\begin{array}{rcl}
& &ex(n,\{K_{l,t},M_{s+1}\},x)\leq(l-1)n+ex(2(s-l+1)+1,K_{l,t})-\frac{l(l-1)}{2}\\
        &\le& (l-1)n+ex(2(s-l-1)+1,K_{l,t})+8(s-l)+2-\frac{l(l-1)}{2}\\
        &\le &(l-1)n+ex(2(s-l-1)+1,K_{l,t})+4(t-\frac{3}{2})+\frac{l+1}{2}-\lfloor\frac{l-1}{2}\rfloor-1-\frac{l(l-1)}{2}\\
        &\le &(l-1)n+ex(2(s-l-1)+1,K_{l,t})+(t-1)(l+1)-\lc\frac{l-1}{2}\rc-1-\frac{l(l-1)}{2}\\
        &\leq& (l-1)n +(t-1)(l+1)
         +ex(2(s-l-1)+1,K_{l,t})-\lc\frac{(l+1)(l-1)}{2}\rc\\
         &=&F_1(l+1)\le \max_{l\leq x\leq s }\{F_1(x)\}.
     \end{array}$$
By $ex(n,\{K_{l,t},M_{s+1}\})=\max_{1\leq x\leq s}ex(n,\{K_{l,t},M_{s+1}\},x)$ and (\ref{low}), we have $$\exkm= \max_{l\leq x\leq s }\{F_1(x)\}.$$

 It is easy to check that $F_1(y+2)-F_1(y+1)\geq F_1(y+1)-F_1(y)$. So $\max_{l\leq x\leq s }\{F_1(x)\}=\{F_1(l),F_1(s)\}$ and we are done.
\end{proof}

Notice that when $2l\leq 2x\leq 2s-t$, the gap between the lower bound given in Theorem \ref{thm: ex for k_l,t} and the upper bound given in Lemma \ref{lem: ex(n,klt >l) leq xxx} is $(2(s-x)+1)(l-1)$. The reason for this gap may be the edges between $S$ and $X$ in the Construction \ref{con: k_l,t-free 2}, and this might be difficult to determine which depending on the specific value of $l,t$ and $s$.


\section{Proofs of Theorems \ref{thm: ex for k_2,2 free} and \ref{thm: ex for k_2,t free}} \label{sec: ex for k2t free}
In this section, we will discuss  $ex(n,K_{2,t})$ for $t\ge 2$. We also consider the case when $n\geq 2s+1$.

First we  give the proof of Theorem \ref{thm: ex for k_2,2 free}. Recall $ex(n,C_4)\le \frac n4(1+\sqrt{4n-3})$.

\begin{reproof1_4}
    Note that when $s\geq 12$, $n(0) = 2s+1\leq {s\choose 2}+s+1=n(s)$,
    then the Construction \ref{con: k_l,t-free 1} with $l=t=2$ and $x=s$ gives a lower bound of $ex(n,\{K_{2,2},M_{s+1}\})$, which implies when $n\geq {s\choose 2}+s+1$,
    \begin{equation}
        ex(n,\{K_{2,2},M_{s+1}\})\geq n+{s \choose 2}-\lc \frac{s}{2}\rc.
    \end{equation}
Lemma \ref{lem: calculate ex(n,klt,l-1)} implies that
$ex(n,\{K_{2,2},M_{s+1}\},1)\leq n+ex(2s-1, K_{2,2})-1.$ By Lemma \ref{lem: ex(n,klt >l) leq xxx}, for any $2\leq x\leq s$, we have
\begin{equation*}
\begin{aligned}
    ex(n,\{K_{2,2},M_{s+1}\},x)\leq& {x\choose 2}+n-\lc\frac{x}{2}\rc+ex(2(s-x)+1,K_{2,2})\\
    \leq & {x\choose 2}+n-\lc\frac{x}{2}\rc+\frac{2(s-x)+1}{4}(1+\sqrt{8(s-x)+1}).
\end{aligned}
\end{equation*}
Setting $f(x)=\frac{x(x-1)}{2}+n-\lc\frac{x}{2}\rc+\frac{2(s-x)+1}{4}(1+\sqrt{8(s-x)+1})$. We can easily find its convexity by taking the derivative. Then we have $ex(n,\{K_{2,2},M_{s+1}\},x)\leq \max\{f(s),f(2)\}$, i.e.
$$ex(n,\{K_{2,2},M_{s+1}\},x)\leq \max\left\{n+{s\choose 2}-\lc \frac{s}{2}\rc+ \frac{3}{4},n+\frac{2s-3}{4}(1+\sqrt{8s-15})\right\},$$where $2\le x\le s$.
Since $ex(n,\{K_{2,2},M_{s+1}\})=\max_{1\leq x\leq s}ex(n,\{K_{2,2},M_{s+1}\},x)$, we have
$$\begin{array}{rcl}
& &ex(n,\{K_{2,2},M_{s+1}\})\\
&\le & \max\left\{n+ex(2s-1, K_{2,2})-1,n+{s\choose 2}-\lc \frac{s}{2}\rc+ \frac{3}{4},n+\frac{2s-3}{4}(1+\sqrt{8s-15})\right\}.
\end{array}$$
When $s\geq 12$, we have $${s\choose 2}-\lc\frac{s}{2}\rc+ \frac{3}{4}\geq \frac{2s-3}{4}(1+\sqrt{8s-15})$$ and $${s\choose 2}-\lc\frac{s}{2}\rc+ \frac{3}{4}\geq \frac{2s-1}{4}(1+\sqrt{8s-7})-1\geq ex(2s-1, K_{2,2})-1.$$
So we have $ex(n,\{K_{2,2},M_{s+1}\})\leq n+{s\choose 2}-\lc\frac{s}{2}\rc$ when $s\geq 12$. Thus the result holds.
\end{reproof1_4}
Next, we consider $ex(n,K_{2,t})$ with $t\geq 3$. Recall that the upper bound of $ex(n,K_{2,t})$ mentioned before is
$(1+o(1))\sqrt{t-1}n^{3/2}/2.$ In fact, we have a more strong result.
\begin{lemma}\label{horn} Let $t\ge 3$ and $n\ge 5$. We have
$$ex(n,K_{2,t})\leq\frac{n(1+\sqrt{4(t-1)n-(4t-5)})}4.$$
\end{lemma}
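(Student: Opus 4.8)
The plan is to bound the number of $2$-horns (i.e.\ paths of length two, or equivalently cherries) in a $K_{2,t}$-free graph by a double-counting argument, and then to convert this into a bound on $e(G)$ via convexity. First I would observe that, since $G$ is $K_{2,t}$-free, every pair of distinct vertices $\{u,w\}$ has at most $t-1$ common neighbours; that is, for every $2$-horn $T=\{u,w\}$ we have $|C(T)|\le t-1$, exactly the property recorded right before the Tutte--Berge discussion in Section~\ref{sec: preliminary}. Counting $2$-horns by summing over their centers gives
\begin{equation}\label{eq:horncount}
\sum_{v\in V(G)}\binom{d(v)}{2}=\sum_{\{u,w\}\subseteq V(G)}|C(\{u,w\})|\le (t-1)\binom{n}{2}.
\end{equation}

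Next I would apply convexity of $x\mapsto\binom{x}{2}$ (Jensen's inequality) to the left-hand side. Writing $d=\frac{1}{n}\sum_v d(v)=\frac{2e(G)}{n}$ for the average degree, \eqref{eq:horncount} yields $n\binom{d}{2}\le (t-1)\binom{n}{2}$, i.e.
\begin{equation*}
\frac{2e(G)}{n}\Bigl(\frac{2e(G)}{n}-1\Bigr)\le (t-1)(n-1).
\end{equation*}
Setting $e=e(G)$ and multiplying out gives the quadratic inequality $4e^2-2ne-(t-1)n^2(n-1)\le 0$ in $e$. Solving for $e$ and keeping the positive root produces
\begin{equation*}
e(G)\le \frac{2n+\sqrt{4n^2+16(t-1)n^2(n-1)}}{8}=\frac{n\bigl(1+\sqrt{1+4(t-1)(n-1)}\bigr)}{4},
\end{equation*}
and since $1+4(t-1)(n-1)=4(t-1)n-(4t-5)$, this is exactly the claimed bound.

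The routine part is purely the algebra of solving the quadratic and simplifying the discriminant, so the only point that needs genuine care is \eqref{eq:horncount}: I must make sure the common-neighbour bound is applied to \emph{unordered} pairs and that the center-counting identity $\sum_v\binom{d(v)}{2}=\sum_{\{u,w\}}|C(\{u,w\})|$ is set up with the correct normalization, since an off-by-a-factor here would shift the constant inside the square root. The hypotheses $t\ge 3$ and $n\ge 5$ are not needed for the inequality itself but presumably guarantee that the bound is meaningful (non-vacuous) and consistent with the regime in which it will later be invoked; I would not expect them to enter the derivation, and I would flag that the argument in fact goes through for all $t\ge 2$ and $n\ge 2$.
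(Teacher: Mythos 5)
Your proposal is correct and follows essentially the same route as the paper: bound the number of $2$-horns by $(t-1)\binom{n}{2}$ using the $K_{2,t}$-free condition, lower-bound $\sum_v\binom{d(v)}{2}$ by convexity, and solve the resulting quadratic $4e^2-2ne-(t-1)n^2(n-1)\le 0$. Your side remark that the hypotheses $t\ge 3$, $n\ge 5$ are not needed is also accurate (the case $t=2$ recovers the stated $C_4$ bound).
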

\begin{proof} Let $G$ be a $K_{2,t}$-free graph with $e(G)=ex(n,K_{2,t})$ and $T$ a $2$-horn of $V(G)$. By the choice of $G$, we have $d(v)\ge 2$ for any $v\in V(G)$.
  Since $G$ is $K_{2,t}$-free, $|C(T)|\le t-1$. Hence we have
  $$       (t-1){n\choose 2}\geq \sum_{v\in V(G)}{d(v)\choose 2}=\frac{1}{2}\left(\frac{e^2(G)}{n}-2e(G)\right),$$ which implies $4e^2(G)-2ne(G)-(t-1)(n^3-n^2)\le 0$.
 Thus the result holds.
\end{proof}

\begin{reproof1_5}
    When $l=2$, $t\geq 3$, $n(0) = 2s+1\leq {s\choose 2}+s+1=n(s)$, then the Construction \ref{con: k_l,t-free 1} with $l=2$ and $x=s$ gives a lower bound of $ex(n,\{K_{2,t},M_{s+1}\})$, which implies when $n\geq {s\choose 2}+s+1$,
    \begin{equation}\label{low-bound} ex(n,\{K_{2,t},M_{s+1}\})\geq n+(t-1){s\choose l}-\lc\frac{s}{2}\rc.\end{equation}
By Lemma \ref{lem: calculate ex(n,klt,l-1)}, we have $ex(n,\{K_{2,t},M_{s+1}\},1)\leq n+ex(2s-1,K_{2,t})-1$. By Lemmas \ref{lem: ex(n,klt >l) leq xxx} and \ref{horn}, for  $2\leq x\leq s$, we have
\begin{equation*}
\begin{aligned}
    &ex(n,\{K_{2,t},M_{s+1}\},x)\leq (t-1){x\choose 2}+n-\lc\frac{x}{2}\rc+ex(2(s-x)+1,K_{2,t})\\
    \leq &(t-1) {x\choose 2}+n-\lc\frac{x}{2}\rc
    +\frac{2(s-x)+1}{4}(1+\sqrt{4(t-1)(2(s-x)+1)-(4t-5)}).
\end{aligned}
\end{equation*}
Setting $$h(x) = (t-1){x\choose 2}+n-\lc\frac{x}{2}\rc+\frac{2(s-x)+1}{4}(1+\sqrt{4(t-1)(2(s-x)+1)-(4t-5)}),$$
we can easily find its convexity by taking the derivative. Then we have $h(x)\le\max\{h(2),h(s)\}$, i.e.,
\begin{equation}\label{6}
\begin{aligned}
 &ex(n,\{K_{2,t},M_{s+1}\},x)\\
 \leq & \max\left\{n+t-2+\frac{2s-3}{4}(1+\sqrt{4(t-1)(2s-3)-4t+5}),n+(t-1){s\choose 2}-\lc\frac{s}{2}\rc+\frac{3}{4}\right\},
\end{aligned}
\end{equation}where $2\le x\le s$.

When $s$
is large enough, we have that
$$n+(t-1){s\choose 2}-\lc\frac{s}{2}\rc+\frac{3}{4}\geq n+t-2+\frac{2s-3}{4}(1+\sqrt{4(t-1)(2s-3)-4t+5})$$
and
$$n+(t-1){s\choose 2}-\lc\frac{s}{2}\rc+\frac{3}{4}\geq n+ex(2s-1,K_{2,t})-1.$$
Since $ex(n,\{K_{2,2},M_{s+1}\})=\max_{1\leq x\leq s}ex(n,\{K_{2,2},M_{s+1}\},x)$,  when $s$ is large enough, we have $$ex(n,\{K_{2,t},M_{s+1}\})\leq n+(t-1){s\choose 2}-\lc\frac{s}{2}\rc+\frac{3}{4}.$$ Since $ex(n,\{K_{2,t},M_{s+1}\})$ is an integer,  $ex(n,\{K_{2,t},M_{s+1}\})\leq n+(t-1){s\choose 2}-\lc\frac{s}{2}\rc$ and we are done.\end{reproof1_5}

\vskip.2cm
\noindent{\bf Remark} In the proof of Theorem \ref{thm: ex for k_2,t free}, we have $h(2)\le h(s)$ and $ex(n,\{K_{2,t},M_{s+1}\},1)\leq n+ex(2s-1,K_{2,t})-1\le h(1).$ According to the convexity of $h$ and (\ref{6}), we have
    $$ex(n,\{K_{2,t},M_{s+1})\})\leq \max\{h(1),h(s)\}.$$
    Setting $s=y+1$. Then
    \begin{equation*}
        \begin{aligned}
            h(s)-h(1) = (t-1)\frac{(y+1)y}{2}-\frac{y+1}{2}+\frac{3}{4}
            -\frac{3(y+1)}{4}\sqrt{4(t-1)(2y+2-1)-4t+5}+1.
        \end{aligned}
    \end{equation*}
    When $t\geq 6$, for $y\geq 0$, we have
    \begin{equation*}
        \begin{aligned}
            &\frac{3(y+1)}{4}\sqrt{4(t-1)(2y+1)-4t+5}-1\\
            =& \frac{3(y+1)}{4}\sqrt{8y(t-1)+1}-1\\
            \leq &\frac{(t-1)(y^2+y)}{2}\\
            \leq &\frac{(t-1)}{2}y^2+\frac{t-2}{2}y\\
            \leq &\frac{(t-1)(y^2+y)}{2}-\frac{y+1}{2}+\frac{3}{4}.
        \end{aligned}
    \end{equation*}
     So when $t\geq 6$, $h(s)\geq h(1)$ for every $s\geq 2$. Combining with (\ref{low-bound}),
      we have the following corollary.

\begin{cor}
    Let $n\geq {s\choose 2}+s+1$, $t\geq 6$ and $s\geq 3$. Then we have $$ex(n,\{K_{2,t},M_{s+1}\})=n+(t-1){s\choose 2}-\lc\frac{s}{2}\rc.$$
\end{cor}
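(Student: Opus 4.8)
The plan is to sharpen the proof of Theorem~\ref{thm: ex for k_2,t free} so that the decisive comparison $h(1)\le h(s)$ holds for \emph{every} $s\ge 3$ rather than only for $s$ large, the extra room being supplied by the stronger hypothesis $t\ge 6$. The lower bound needs no new idea: since $n(0)=2s+1\le {s\choose 2}+s+1=n(s)$ for $s\ge 3$, the assumption $n\ge {s\choose 2}+s+1$ makes Construction~\ref{con: k_l,t-free 1} admissible with $l=2$ and $x=s$, and Lemma~\ref{con-lem} together with (\ref{eq: con1 of K_l,t}) gives the bound $ex(n,\{K_{2,t},M_{s+1}\})\ge n+(t-1){s\choose 2}-\lc\frac{s}{2}\rc$ recorded in (\ref{low-bound}).

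For the upper bound I would recycle the estimates assembled for Theorem~\ref{thm: ex for k_2,t free}. Splitting $ex(n,\{K_{2,t},M_{s+1}\})=\max_{1\le x\le s}ex(n,\{K_{2,t},M_{s+1}\},x)$, the terms with $2\le x\le s$ are each at most $h(x)$ by Lemmas~\ref{lem: ex(n,klt >l) leq xxx} and~\ref{horn}, while the term $x=1$ satisfies $ex(n,\{K_{2,t},M_{s+1}\},1)\le n+ex(2s-1,K_{2,t})-1\le h(1)$, the last step being exactly the instance of Lemma~\ref{horn} that defines $h(1)$. Since $h$ is convex on $[1,s]$ (as checked by differentiation in that proof), all of these bounds are dominated by $\max\{h(1),h(s)\}$, so $ex(n,\{K_{2,t},M_{s+1}\})\le\max\{h(1),h(s)\}$ and the entire problem collapses to verifying $h(1)\le h(s)$.

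The heart of the matter is this last inequality, which is where $t\ge 6$ must be used. Setting $s=y+1$ collapses the radicand common to both quantities to $4(t-1)(2y+1)-4t+5=8y(t-1)+1$, so $h(s)-h(1)\ge 0$ reduces to showing that the quadratic contribution $(t-1){y+1\choose 2}$ from $h(s)$ outweighs the term $\tfrac{2y+1}{4}\sqrt{8y(t-1)+1}$ from $h(1)$ together with the lower-order corrections. I would do this by bounding the square root above by a suitable quadratic in $y$ and then comparing coefficients through a short chain of elementary inequalities, the single nontrivial step being the passage from the square-root term to the quadratic; the hypothesis $t\ge 6$ is what guarantees this dominance already at the smallest relevant value $y=2$, after which it persists since the quadratic grows faster than the order-$y^{3/2}$ root. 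This square-root estimate is the main obstacle: it must hold uniformly in $y$, and for smaller $t$ it can fail when $s$ is small. Once $h(1)\le h(s)$ is in place, the upper bound $ex(n,\{K_{2,t},M_{s+1}\})\le h(s)$ overshoots the target $n+(t-1){s\choose 2}-\lc\frac{s}{2}\rc$ by less than $1$, so integrality of the Tur\'an number combined with (\ref{low-bound}) yields the claimed equality.
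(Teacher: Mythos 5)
Your proposal is correct and follows essentially the same route as the paper's own argument (given in the Remark preceding the corollary): the lower bound comes from Construction \ref{con: k_l,t-free 1} with $x=s$, the upper bound is reduced to $\max\{h(1),h(s)\}$ via the convexity of $h$ and the bound $ex(n,\{K_{2,t},M_{s+1}\},1)\le h(1)$, and the substitution $s=y+1$ collapses the radicand to $8y(t-1)+1$ so that $h(1)\le h(s)$ is verified for $t\ge 6$ by dominating the square-root term with the quadratic term $(t-1)\binom{y+1}{2}$. The only differences are cosmetic: you keep the exact coefficient $\tfrac{2y+1}{4}$ on the root term (the paper's $\tfrac{3(y+1)}{4}$ appears to be a slip), and you leave the final elementary inequality as a sketch, but it does hold for all $y\ge 2$, i.e.\ $s\ge 3$, so the argument goes through.
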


\section{Conclusion}
In this paper, we give the results about maximum number of edges of $K_{l,t}$-free graphs with bounded matching number. In the case of $l\ge3$, when $s$ is sufficiently large with respect to $l$ or when $t$ is sufficiently large with respect to $s-l$, we can give the exact value of $\exkm$ with the extremal graph. But in other case, there exists a gap between the lower bound and the upper bound of $\exkm$. In the case of $l=2$, we give more specific result of $\excm$ and $ex(n,\{K_{2,t},M_{s+1}\})$ when $s$ is larger than some values depending on $t$. From the proofs of our main Theorems, we guess the exact value of $\exkm$ will be attached at the  Construction \ref{con: k_l,t-free 2} by adding some edges between $S$ and $X$ when $s$ is relatively small.
But it is still difficult to get the specific value when $s$ is relatively small. We believe it depends on the specific value of $s,l$ and $t$.

\section*{Acknowledgement}
	
This research was supported by the National Natural Science Foundation of China (Grant 12171272 \& 12161141003).

\end{document}